\documentclass[11pt,leqno]{article}
\RequirePackage[leqno,cmex10]{amsmath}
\RequirePackage[arxiv,noinfoline]{imsart}
\RequirePackage[OT1]{fontenc}
\RequirePackage{amsthm,amssymb}
\usepackage{mathrsfs,enumerate,graphicx,dsfont,hyperref}
\usepackage[initials]{amsrefs}
\usepackage[letterpaper,margin=2.8cm]{geometry}

\usepackage{enumerate}
\newenvironment{enumeratei}{\begin{enumerate}[\upshape (i)]\setlength{\itemsep}{4pt}}{\end{enumerate}}

\numberwithin{equation}{section}
\numberwithin{figure}{section}
\numberwithin{table}{section}
\setcounter{tocdepth}{2}
\sloppy

\theoremstyle{plain}
\newtheorem{thm}{Theorem}[section]
\newtheorem{lem}[thm]{Lemma}

\newtheorem{prop}[thm]{Proposition}
\newtheorem{defn}[thm]{Definition}

\theoremstyle{definition}

\theoremstyle{remark}
\newtheorem{rem}[thm]{Remark}
\renewcommand{\le}{\leqslant}
\renewcommand{\ge}{\geqslant}

\newcommand{\ra}{\rangle}
\newcommand{\la}{\langle}

\newcommand{\eps}{\varepsilon}

\newcommand{\set}[1]{\left\{#1\right\}}

\newcommand{\ie}{\emph{i.e.,}\ }

\newcommand{\iid}{\emph{i.i.d.}~}

\let\ga=\alpha \let\gb=\beta \let\gc=\gamma \let\gd=\delta 
\let\gf=\varphi      
    \let\gs=\sigma  \let\gth=\vartheta
  
 \let\gD=\Delta  \let\gL=\Lambda


\newcommand{\cG}{\mathcal{G}}


\newcommand{\vI}{\mathbf{I}}

\newcommand{\vO}{\mathbf{O}}

\newcommand{\vY}{\mathbf{Y}}

\newcommand{\vp}{\mathbf{p}}

\newcommand{\vv}{\mathbf{v}}
\newcommand{\vy}{\mathbf{y}}

\newcommand{\dN}{\mathds{N}}

\newcommand{\dR}{\mathds{R}}

\newcommand{\sA}{\mathscr{A}}

\newcommand{\sE}{\mathscr{E}}


\DeclareMathOperator{\pr}{\mathds{P}}

\def\eqd{\,{\buildrel d \over =}\,}


\def\tI{{\tilde I}}
\def\tr{{\tilde r}}
\newcommand{\sF}{\mathscr{F}}
\newcommand{\sG}{\mathscr{G}}
\def\beq{ \begin{equation} }
 \def\eeq{ \end{equation} }
 \def\beqx{ \begin{equation*} }
 \def\eeqx{ \end{equation*} }
 \def\beqa{\begin{eqnarray}}
 \def\eeqa{\end{eqnarray}}
 \def\beqax{\begin{eqnarray*}}
 \def\eeqax{\end{eqnarray*}}
\def\eqd{\,{\buildrel d \over =}\,}
\newcommand{\IO}[1]{#1^{\text{in,out}}}
\newcommand{\RBN}{\text{RBN}}
\newcommand{\ola}{\overleftarrow}
\newcommand{\ol}{\overline}
\newcommand{\I}[1]{#1^{\text{in}}}
\newcommand{\OU}[1]{#1^{\text{out}}}

\newcommand{\bE}{\mathbb{E}}

\newcommand{\mvpi}{\boldsymbol{\pi}}
\newcommand{\mvmu}{\boldsymbol{\mu}}
\newcommand{\mvgd}{\boldsymbol{\gd}}
\newcommand{\sref}[1]{\text{\ref{#1}}}
\newcommand{\sI}{\mathscr{I}}

\newcommand{\tY}{\tilde{Y}}
\newcommand{\ep}{\epsilon}

\begin{document}

\begin{frontmatter}
\title{The order-chaos phase transition for a general class of complex Boolean networks}
\runtitle{Complex Boolean networks}

\begin{aug}
\author{\fnms{Shirshendu} \snm{Chatterjee}\thanksref{m1}\ead[label=e1]{shirshendu@cims.nyu.edu}}
\runauthor{S. Chatterjee} \affiliation{New York
University\thanksmark{m1}}

\address{Courant Institute of Mathematical Sciences \\
New York University \\
251 Mercer Street \\
New York, NY 10012, USA\\
\printead{e1}}
\end{aug}

\date{\today}
\begin{abstract}
 We consider a model for heterogeneous {\it gene regulatory networks} that
 is a generalization of the model proposed by Chatterjee and Durrett \cite{CD11}
 as an ``annealed approximation" of Kauffmann's \cite{K69} random Boolean networks. In
 this model, genes are represented by the nodes of a random directed
 graph on $n$ vertices with specified in-degree distribution $\I\vp$ (resp.~out-degree
 distribution $\OU\vp$ or joint distribution $\IO\vp$ of in-degree and out-degree), and the expression bias (the expected fraction of 1's in the Boolean functions) $p$ is same for all nodes.
 Following \cite{CD11} and a standard practice in the physics literature, we use
 a discrete-time {\it threshold contact process} with parameter $q=2p(1-p)$ (in which a vertex with at least one {\it occupied} input at
 time $t$ will be occupied at time $t+1$ with probability $q$, and
 {\it vacant} otherwise) on the above random graph to approximate the
 dynamics of the Boolean network.
 We show that there is a parameter $r$, which has an explicit expression in terms of certain moments of $\I\vp$ (resp.~$\OU\vp$ or $\IO\vp$), such that, with probability tending to 1 as $n$ goes to infinity, if $r\cdot 2p(1-p)>1$,
 then starting from all occupied sites the threshold contact process maintains a positive ({\it quasi-stationary}) density $\rho(\I\vp)$ (resp.~$\rho(\OU\vp)$ or $\rho(\IO\vp)$)
 of occupied sites for time which is exponential in $n$, whereas if $r \cdot 2p(1-p)<1$, then the persistence time of the threshold contact process is at most logarithmic in $n$.
 These two phases correspond to the {\it chaotic} and {\it ordered} behavior of the gene networks.
\end{abstract}

\begin{keyword}[class=AMS]
\kwd[Primary ]{60K35}\kwd[; secondary ]{05C80.}
\end{keyword}

 \begin{keyword}
 \kwd{random graphs} \kwd{directed networks} \kwd{Boolean networks} \kwd{threshold contact process} \kwd{phase
 transition} \kwd{gene regulatory
 networks.}
 \end{keyword}

\end{frontmatter}

 \section{Introduction}
 Experimental evidence \cite{AO03,shmulevich2002binary,tabus2006normalized} suggests that in various
 biological systems, the complex kinetics of genetic control is
 reasonably well approximated by {\it Boolean network} models. These models were first formulated by Kauffman \cite{K69}, and over the last few years they
 have received significant attention, both at the level of model formulations and numerical simulations (see e.g. the surveys
 \cite{KCA02, POGL09,K93,huang1999gene,somogyi1996modeling} and the references therein) and at the rigorous level (see e.g. \cite{CD11, FK88}).
 The basic model can be described as follows. Genes are represented by the nodes of a directed network $\sG_n=\sG_n([n],\sE_n)$
 on $n$ vertices, where $[n]:=\set{1,2,\ldots, n}$ denotes the vertex set and $\sE_n$ denotes the edge set for $\sG_n$.
 The state $\eta_t(x)$ of a node $x\in [n]$ at time $t=0, 1, 2, \ldots$ is either
 1 (`on') or 0 (`off'), and each node $x$ receives input from the nodes which point {\bf to}
 it in $\sG_n$, namely
 \[ Y^x := \{y \in [n]: \la y,x\ra \in \sE_n\}. \]
 $Y^x$ is called the {\it input set} and its members are called the {\it input nodes} for
 $x$. The states $\{\eta_t(x)\}_{t \ge 0, x \in [n]}$ evolve according to the update rule
 \beq \label{rbn1}
 \eta_{t+1}(x) = f_x( (\eta_t(y), y\in Y^x) ), \qquad x\in [n],
 \eeq
 where each $f_x:\set{0,1}^{|Y^x|}\to \set{0,1}$ is
 some time-independent Boolean function defined on the set of states of the input nodes for $x$.
 Here and later we use $|A|$ to denote the size of a set $A$.

In order to understand general properties of such dynamical systems,
various {\it random Boolean network} models have been formulated,
which form an important subfamily of these models.
 The simplest such model with parameters $r$ (number of inputs per node)
 and $p$ (expression bias), which we denote by $\RBN^1_{r,p}$, consists of the following specification of the constructs in the above general model.
 The base network $\sG_n$
 is constructed by choosing, for each node $x\in [n]$,
 the input set $Y^x=\{Y_1(x), \ldots, Y_r(x)\}$ consisting of $r$ distinct input nodes uniformly from $[n]\setminus\{x\}$.
 The values $f_x(\vv), x \in [n], \vv\in \{ 0, 1\}^{|Y^x|},$ are assigned independently and each equals 1 with probability $p$.
 The dynamics then proceeds as in \eqref{rbn1} from a specified starting configuration $\set{\eta_0(x): x\in [n]}$ at time $t=0$.
 Note that the functions $f_x$ and the graph $\sG_n$ are fixed at time $0$ and then the dynamics of this system is {\bf deterministic}.

   Kauffman introduced $\RBN^1_{r,1/2}$ in \cite{K69}, and that model has been analyzed in detail for $r=1$ \cite{FK88}. The general model $\RBN^1_{r,p}$
 has been studied extensively via simulations (see e.g.
 \cite{KCA02,pomerance2009effect}) and using heuristics from Statistical Physics (see e.g. \cite{drossel20083,drossel2005number}). It has been argued in \cite{DP86} that
  the behavior of $\RBN^1_{r,p}$ undergo a phase transition for $r\ge 3$,
 and
 \beq \label{ph tran1}
 \text{ the {\it order-chaos} phase transition curve for } \RBN^1_{r,p} \text{ is given by } 2p(1-p)\cdot r = 1 \eeq
 in the sense that  $\RBN^1_{r,p}$ is ``ordered'' (the configuration of zeros and ones rapidly converges to a fixed point or attractor) for $2p(1-p)\cdot r<1$, whereas $\RBN^1_{r,p}$ is
 ``chaotic'' (the configuration keeps on changing for an exponentially long time) when $2p(1-p)\cdot r>1$.
 This phase transition picture has recently been proved for an
 ``annealed approximation" of the deterministic dynamical system
 \cite{CD11,MV11}, which we shall describe soon.

 Although $\RBN^1_{r,p}$ has been studied extensively, the model deals with an idealized setting of homogeneous networks, where every vertex has the same in-degree. This assumption constrains the model in the context of biological applications where such networks have quite heterogeneous degrees \cite{RA05}.
 A natural question is whether similar phase transitions occur in the
 context of
 heterogeneous complex networks, and if so, how does the corresponding phase transition curves depend on the underlying parameters describing the networks.
 The update rule in case of heterogeneous networks is similar to that for $\RBN^1_{r,p}$, namely for each node $x\in
 [n]$, given the input set $Y^x$, the values
 $f_x(\vv)$ for $\vv\in \set{0,1}^{|Y_x|}$ and $x \in [n]$ are chosen via independent coin flips
 with success probability $p$.

 Heterogeneous network models as the underlying base graph have been considered in the physics and biology literature. Two classes of such network models have been formulated in the literature.
\begin{enumeratei}
    \item {\bf Networks with prescribed in-degree:} A number of authors (e.g. \cite{SL94, LS97, FH01,
     AC03}) have considered directed network models with prescribed in-degree distribution. Here one starts with a probability mass function $\I\vp:= \set{\I p_k}_{k\geq 1}$ and constructs a directed random network where for $k\geq 0$, the proportion of vertices which have input set of size $k$ is approximately $\I p_k$ in the large network limit $n\to\infty$. The precise method of construction is described in Section \ref{model}.  
     We denote this model with in-degree distribution $\I\vp$ and expression bias $p$ by
     $\RBN^2(\I\vp,p)$.
     It has been argued in the above papers that
     \beqa
     && \text{the order-chaos phase transition curve for $\RBN^2(\I\vp,p)$ is } 2p(1-p)\cdot \I r = 1, \notag \\
     && \text{ where }  \I r := \sum_k k \I p_k \text{ is the average in-degree.}\label{ph tran2} \eeqa

 \begin{rem} An analogous model can be built, where we now specify the out-degree distribution $\OU\vp=\{\OU p_k\}_{k\ge 1}$.  We denote such a model with out-degree distribution
 $\OU \vp$ and expression bias $p$ by $\RBN^3(\OU p, p)$. The
 analogous conjecture in this regime is
 \beqa
 \text{the order-chaos phase transition curve for $\RBN^3(\OU\vp,p)$ should be } 2p(1-p)\cdot \OU r = 1, \notag \\
 \text{ where }  \OU r := \sum_k k \OU p_k \text{ is the average out-degree.}\label{ph tran3}
 \eeqa
 \end{rem}

 \item {\bf Networks with prescribed joint distribution of in-degree and out-degree:} The most general and complex form of this model is where one incorporates correlation between the in-degree and out-degree via prescribing their joint distribution \cite{LR08}. Here one starts with a bivariate probability mass function $\IO \vp=\{\IO p_{k,l}\}_{k,l\ge 1}$ and constructs a random graph where the asymptotic density of vertices in $\sG_n$ with in-degree $k$ and out-degree $l$ is $\IO p_{k,l}$ as $n\to\infty$. We defer a complete description of the construction to Section \ref{model}. We denote this model by $\RBN^4(\IO\vp,p)$.  It is argued non-rigorously in \cite{LR08} that for this model
 \beqa
 && \text{the order-chaos phase transition curve for $\RBN^4(\IO\vp,p)$ is } 2p(1-p) \cdot \frac{\IO r}{\I r}=1, \notag \\
 && \text{ where } \IO r := \sum_{k,l} kl\IO p_{k,l}, \I r :=
 \sum_{k,l} k\IO p_{k,l}. \label{ph tran4}\eeqa
 \end{enumeratei}

\subsection{Annealed approximations to Boolean networks}
\label{sec:anneal} Proving rigorous results about the formulated
discrete dynamical systems turns out to be quite hard.
 In order to understand the conjectured phase transitions in
 \eqref{ph tran2}, \eqref{ph tran3} and \eqref{ph tran4} rigorously, we consider a different process called the \emph{threshold contact process}. To motivate this process and the connection to the boolean network model $\set{\eta_t(x):x\in \sG_n, t\geq 0}$, first consider the process
 $\{\zeta_t(x), x\in [n], t\ge 1\}$, where $\zeta_t(x)=1$ if $\eta_t(x) \neq \eta_{t-1}(x)$
 and $\zeta_t(x)=0$ otherwise.  Fix node $x\in [n]$. Suppose at least one of the inputs $y\in Y^x$
 changes its state between time epochs $t-1$ and $t$ so that $\eta_t(y) \ne \eta_{t-1}(y)$.
 Then the state of node $x$ at time $t+1$ is
 computed by looking at a different entry of $f_x$.  Ignoring
 the fact that we may have used this entry before, one approximately has
\[\pr(\zeta_{t+1}(x) = 1| \zeta_t(y)=1 \text{ for at least one } y\in Y^x ) =  2p(1-p)\]
If $\zeta_t(y) = 0$ for all $y\in Y^x$, then obviously
$\zeta_{t+1}(x) = 0$. This dynamics motivates the following process
which will be the main aim of this study.
\begin{defn} \label{def:threshold}
    The threshold contact process on a finite directed graph, where $Y^x$ denotes the set of input vertices to the node $x$, is the discrete time Markov process $\set{\xi_t(x):x\in [n]}_{t\geq 0}$ with evolution dynamics
\beqa
 \pr\left(\left. \xi_{t+1}(x)=1 \right|
 \xi_t(y)=1 \text{ for at least one } y\in Y^x \right)& = & 2p(1-p),
 \text{ and} \notag \\
 \pr\left(\left. \xi_{t+1}(x)=0 \right|
 \xi_t(y) = 0 \text{ for all } y\in Y^x \right)& = & 1,
 \label{threshcp} \eeqa
 Conditional on the state at time $t$, the decisions on the values of
 $\xi_{t+1}(x), x \in [n]$, are independent.
 \end{defn}
  This process has been called the {\it annealed approximation}
 to the random Boolean network model \cite{DP86}. For the rest of the study we will write $q=2p(1-p)$.
 For the threshold contact process, we shall prove that the conjectures
  in \eqref{ph tran2}, \eqref{ph tran3} and \eqref{ph tran4}
  do represents the order-chaos phase transition for this process.

 \subsection{Construction of random directed networks} \label{model}
 In this section, we provide precise mathematical formulation of the
 underlying network models for $\RBN^2$ and $\RBN^4$ ($\RBN^3$ can be considered
 as a special case $\RBN^4$).

\subsubsection{Construction of the network for $\RBN^2$}

 For $\RBN^2$, let $\I\vp=\{\I p_k\}_{k\ge 1}$ be a prescribed in-degree distribution with
 $\I r := \sum_k k \I p_k   < \infty$.
 Following \cite{CD11},  we construct the random directed graph $\sG_n = \sG_n([n], \sE_n)$ with in-degree distribution $\I\vp$ as follows.
 First we choose the in-degrees $ I_1, I_2, \ldots, I_n $ independently with common distribution $\I\vp$. For each node
 $x\in[n]$ we choose the corresponding input set $Y^x:=\{Y_1(x), Y_2(x), \ldots, Y_{I_x}(x)\}$ by choosing
 $|I_x|$ distinct nodes uniformly from $[n]\setminus\{x\}$.
 Finally we place oriented edges from these chosen vertices {\bf to} $x$ to obtain the edge set $\sE_n$ of the graph $\sG_n$, $\sE_n=\{\la Y_i(x),x\ra: x\in [n], 1\le i\le I_x\}$.

 It is easy to see that given the in-degree sequence $\{I_i\}$, the number of choices for the input sets $\set{Y^x: x\in [n]}$  is $\prod_{i=1}^n {n-1 \choose I_i}$. Writing $e_{zx}$ for the indicator denoting the presence or absence of a directed
 edge from node $z$ to node $x$, $\pr_{2,\vI}$ for the conditional (``quenched ") distribution of $\sG_n$ given the in-degrees $\set{I_x:x\in [n]}$ and $\pr_{2,n}$ for the unconditional (``annealed'') distribution of $\sG_n$,
 \beq \label{P2def}
 \pr_{2,n}(\cdot) := \sum_{\vI\in\dN^n} \pr_{2,\vI}(\cdot) \;
 \I\vp_{\otimes n}(\vI),\eeq
 where $\I\vp_{\otimes n}$ is the product
 measure on $\dN^n$ with marginal $\I\vp$, and
 \begin{equation}
 \label{bP_2def} \pr_{2,\vI} (e_{zx}, 1\le z,
     x\le n)=1/
     \prod_{i=1}^n {n-1 \choose I_i} \text{ if $e_{z,x}\in\{0,1\},
     e_{x,x}=0, \sum_{z=1}^n e_{zx}=I_x \forall~ x\in [n]$, }
\end{equation}
and $\pr_{2,\vI}(e_{zx}, 1\le x, z\le n)=0$ otherwise.

%

 \subsubsection{Construction of the network for $\RBN^4$} \label{cons:RBN4}
 We follow the procedure of Newman, Strogatz and Watts \cite{NSW01,
 NSW02}. Given a prescribed joint distribution for in-degree and out-degree
 $\IO\vp=\{\IO p_{k,l}\}_{k,l\ge 1}$ with
 $\IO r := \sum_{k,l} kl \IO p_{k,l}   < \infty$ and $\I r := \sum_{k,l} k \IO p_{k,l} = \sum_{k,l} l \IO p_{k,l}=:\OU r$, let $\set{(I_i, O_i)}_{1\leq i\leq n}$ be \iid with common distribution $\IO\vp$; here $I_x$ and $O_x$ denote the
 in-degree and out-degree  of node $x$ respectively.
 We need to condition on the event
 \beq \label{Endef} E_n := \left\{ \sum_{i=1}^n I_i = \sum_{i=1}^n O_i\right\}
 \eeq
 to have a valid degree sequence. Having chosen
 the degree sequence $\{(I_x,O_x)\}_{x\in [n]}$, allocate $I_x$ many ``inward arrows"
 and $O_x$ many ``outward arrows"  for node $x$.  Pick a
 uniform random matching between the set of inward arrows and outward arrows. If one of the inward arrows of $x$ is matched with one of
 the outward arrows of $z$, then we let $\la z,x\ra \in \sE_n$. Let
 $\pr_{4,\vI,\vO}$ denote the conditional (``quenched") distribution of $\sG_n$
 given $\{(I_i,O_i)\}$. We also condition on the event
 \beq \label{Fndef}
 F_n:=\{ \sG_n \text{ is simple}\},\eeq
 \ie it neither contains any
 self-loop at some vertex, nor contains multiple edges between two
 vertices. So if $\pr_{4,n}$ denotes the unconditional (``annealed") distribution of
 $\sG_n$, then
 \beq
 \pr_{4,n}(\cdot) = \sum_{\vI,\vO\in\dN^n}
 \pr_{4,\vI,\vO}(\cdot| F_n) \IO\vp_{\otimes n}((\vI,\vO) | E_n),
 \label{P4def}\eeq
 where $\IO\vp_{\otimes n}$ is the product measure on
 $(\dN^2)^n$ with marginal $\IO\vp$.

 \begin{rem}
 For $\RBN^3$, given the prescribed out-degree distribution $\OU\vp=\{\OU
 p_k\}_{k\ge 1}$ with $\OU r := \sum_k k \OU p_k   < \infty$, one can follow the construction of $\sG_n$ for $\RBN^4$ (see Section \ref{cons:RBN4}) corresponding to any $\IO\vp$ with marginal $\OU\vp$.
 \end{rem}


 \subsection{Dynamics}
Once the base network has been fixed via one of the above
constructions, we shall be interested in properties of the threshold
contact process $\set{\xi_t}_{t\geq 0}$ as in Definition
\ref{def:threshold} with paramter $q=2p(1-p) \in (0, 1/2)$. We shall
often view this as a set valued process $\xi_t:=\set{x\in [n]:
\xi_t(x)=1 }$. We shall sometimes refer to this as the set of
occupied sites at time $t$. We will write $\pr_{\sG_n,q}$ for the
distribution of the threshold contact process
 $\{\xi_t\}_{t\ge 0}$ with parameter $q$, conditioned on the base graph $\sG_n$.
 For a fixed set $A\subseteq [n]$, we shall write $\set{\xi_t^A}_{t\geq 0}$ for
 the process started with $\xi_0= A$.


 \subsection{Main results} \label{sec:results}

 For a probability distribution $\mvmu$ on $\{0, 1, \ldots\}$ and $q\in[0,1]$ let
 $\mvpi(\mvmu,q) \in [0,1]$ denote the survival probability for the
 branching process with offspring distribution $(1-q)
 \mvgd_0+q\mvmu$ starting from one individual. From the branching
 process theory,
 \beq \label{pidef}
 \mvpi(\mvmu,q)=1-\theta, \text{ where $\theta \in [0,1]$ is the minimum value satisfying } \theta=1-q+\sum_{k\ge 0}q\mvmu\{k\} \theta^k.\eeq
 Using the above ingredients we now present our main result.

 \begin{thm}\label{thm:RBN2}
 For any probability distribution $\I\vp=\{\I p_k\}_{k \geq 0}$ on $\dN$, let $\pr_{2,n}$ be the probability distribution
 (as defined in \eqref{P2def}) on the set of random directed graphs on $n$ vertices having in-degree distribution $\I\vp$.
 Suppose $\I\vp$ has mean $\I r$ and finite second moment, $\I p_0 = \I
 p_1=0$, and $q \in (0,1)$ satisfies
 $q\in (1/\I r,1/2)$. Let $\pi:=\mvpi(\I\vp,q)$ be the branching process survival probability as defined in
 \eqref{pidef}. Then $\pi > 0$, and for any $\eps>0$ there is a constant $\gD(\eps)>0$ and a `good' set of graphs $\cG_n$ satisfying $\pr_{2,n}(\cG_n) =1-o(1)$ such that if $\sG_n \in \cG_n$, then
 \[ P_{\sG_n,q}\left(\inf_{t \le \exp(\gD n)}\frac{|\xi^{[n]}_t|}{n} \ge \pi-\eps\right) \to 1 \text{ as } n\to\infty.\]
 Moreover, if $q$ satisfies $q\I r < 1$, then there is a constant
 $C(q, \I r)>0$ such that $P_{\sG_n,q}(\xi^{[n]}_{C\log n} \ne
 \emptyset)=1-o(1)$ for all $\sG_n \in \cG_n$.
 \end{thm}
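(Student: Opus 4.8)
\emph{Plan.} The plan is a second--moment argument for the number of occupied sites at time $t:=\lceil C\log n\rceil$, where $C=C(q,\I r)>0$ is a small constant fixed at the end. Since $\I p_0=\I p_1=0$, every vertex of $\sG_n$ has in-degree $\ge 2$; I would pass to the \emph{pruned} digraph $\sG_n^{\mathrm{pr}}$ in which the input set of each $x$ is replaced by the set $\wt Y^x$ of its two lowest-labelled members, and run the threshold contact process $\{\xi_s^{\mathrm{pr}}\}$ with parameter $q$ on $\sG_n^{\mathrm{pr}}$. Realising it through i.i.d.\ coins $U_{s,x}\sim\mathrm{Ber}(q)$ (so $\xi_{s+1}^{\mathrm{pr}}(x)=U_{s+1,x}\cdot\ind[\xi_s^{\mathrm{pr}}\cap\wt Y^x\ne\eset]$), one has $\xi_s^{[n],\mathrm{pr}}\subseteq\xi_s^{[n]}$, so it suffices to show $P_{\sG_n,q}(W>0)\to1$ with $W:=|\xi_t^{[n],\mathrm{pr}}|$; and unwinding the recursion, $\xi_t^{[n],\mathrm{pr}}(x)=1$ iff there is a length-$t$ walk $x=x_t\to x_{t-1}\to\dots\to x_0$ in $\sG_n^{\mathrm{pr}}$ with $U_{s,x_s}=1$ for $1\le s\le t$. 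I record two elementary bounds, valid for every base graph: writing $P_s(x):=P_{\sG_n,q}(\xi_s^{[n],\mathrm{pr}}(x)=1)$, a union bound over $\wt Y^x$ gives $P_s(x)\le 2q\max_z P_{s-1}(z)$, hence $P_s(x)\le(2q)^s$ (recall $2q<1$); and if the depth-$t$ backward cone of $x$ in $\sG_n^{\mathrm{pr}}$ is a genuine tree it is a perfect binary tree on distinct vertices, the coins on its top $t$ levels are independent, and a one-line recursion for $q$-site percolation on a binary tree gives $P_t(x)\ge\tfrac12\,c_1^{\,t}$ with $c_1:=q(2-q)\in(0,1)$.

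\emph{The good set.} I would enlarge the good event $\cG_n$ (keeping $\pr_{2,n}(\cG_n)=1-o(1)$) to also contain: (i) the number of vertices $x$ whose depth-$t$ backward cone in $\sG_n^{\mathrm{pr}}$ is not a tree is $\le n^{1-c_3}$ for a suitable constant $c_3>0$; and (ii) $\sum_v|F_v|^2\le n^{1+2C\log\I r+o(1)}$, where $F_v$ is the depth-$t$ \emph{forward} cone of $v$ in $\sG_n^{\mathrm{pr}}$. For (i): a backward exploration shows $\pr_{2,n}(\text{cone of }x\text{ has a repeat})=O(4^t/n)$, so the expected number of such $x$ is $O(4^t)=O(n^{C\log 4})$, which is $o(n)$ once $C\log 4<1$; Markov then gives (i) with $c_3$ any constant below $1-C\log4$. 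For (ii): $F_v$ is stochastically dominated by the depth-$t$ progeny of a Galton--Watson tree whose offspring law lies below the out-degree distribution of $\sG_n$, and \emph{because $\I\vp$ has a finite second moment} this out-degree has mean $\I r$ and a second moment bounded uniformly in $n$; the standard second-moment estimate for neighbourhood growth then gives $\E_{2,n}|F_v|^2=O(\mathrm{poly}(t)\,\I r^{2t})$, hence $\E_{2,n}\sum_v|F_v|^2=O(n\,\mathrm{poly}(t)\,\I r^{2t})$, and Markov gives (ii).

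\emph{Second moment and conclusion.} Fix $\sG_n\in\cG_n$ and work under $P_{\sG_n,q}$. By the lower bound and (i), $\E W=\sum_x P_t(x)\ge\tfrac12 c_1^{\,t}(n-n^{1-c_3})\ge\tfrac14 c_1^{\,t}n$. Call an ordered pair $(x,y)$ \emph{good} if neither backward cone has a repeat and the two depth-$t$ backward cones are vertex-disjoint; then $\{x\in\xi_t^{[n],\mathrm{pr}}\}$ and $\{y\in\xi_t^{[n],\mathrm{pr}}\}$ are functions of disjoint sets of coins, so $\sum_{\text{good }(x,y)}P_{\sG_n,q}(x,y\in\xi_t^{[n],\mathrm{pr}})\le(\sum_x P_t(x))^2=(\E W)^2$. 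For a non-good pair I bound the joint probability by $\min(P_t(x),P_t(y))\le(2q)^t$; such a pair either contains a vertex with a repeat in its cone, or its two cones share a vertex $v$ (so $x,y\in F_v$), whence the number of non-good pairs is $\le 2n^{2-c_3}+\sum_v|F_v|^2\le 2n^{2-c_3}+n^{1+2C\log\I r+o(1)}$. Therefore $\E[W^2]\le(\E W)^2+(2q)^t(2n^{2-c_3}+n^{1+2C\log\I r+o(1)})$, and since $(2q)^t c_1^{-2t}=n^{C\gamma+o(1)}$ with $\gamma:=\log\!\bigl(2/(q(2-q)^2)\bigr)>0$,
\[
\frac{\E[W^2]}{(\E W)^2}\ \le\ 1+O\!\Bigl(n^{C\gamma-c_3+o(1)}+n^{C(\gamma+2\log\I r)-1+o(1)}\Bigr).
\]
Taking $c_3$ close to $1-C\log4$, both exponents are negative as soon as $C(\gamma+\log4)<1$ and $C(\gamma+2\log\I r)<1$ — finitely many "$C$ smaller than an explicit positive constant" requirements, simultaneously met by $C=C(q,\I r)$ small. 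Then Paley--Zygmund gives $P_{\sG_n,q}(W>0)\ge(\E W)^2/\E[W^2]\to1$ uniformly over $\sG_n\in\cG_n$, hence $P_{\sG_n,q}(\xi_t^{[n]}\ne\eset)\to1$; since $t\le(C+1)\log n$ this proves the claim after renaming the constant.

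\emph{Main difficulty.} The crux is the second-moment bound, which squeezes $C$ from two sides: $C$ must be small enough that the backward cones are trees for all but $o(n)$ vertices — this is also why one must prune to in-degree $2$, since the unpruned in-degree has only a finite second moment and hence a polynomial tail that would inflate the cones beyond size $n$ — while the cross-term estimate needs $\sum_v|F_v|^2$ under control, and here one cannot afford a worst-case bound on $|F_v|$ (a single vertex of out-degree $\log n$ along a forward exploration makes the cone superpolynomial), so one must use the Galton--Watson domination together with the finite-second-moment hypothesis on $\I\vp$ to bound $\E|F_v|^2$ by a polynomial. Verifying that the finitely many smallness conditions on $C$ are compatible, and that all the auxiliary graph events carry $\pr_{2,n}$-probability $1-o(1)$, is the bulk of the remaining work.
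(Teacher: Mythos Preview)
Your proposal addresses only the ``Moreover'' clause and entirely omits the main assertion of the theorem --- that in the supercritical regime $q\I r>1$ the process maintains density at least $\pi-\eps$ for time $\exp(\gD n)$. That is the substantive part of the result, and the paper devotes all of Sections~2--5 to it: one passes to the dual coalescing branching process on $\ola\sG_n$, shows via a careful coupling with a truncated forest that the dual from any set $A$ of size between $(\log n)^a$ and $\eps n$ grows geometrically with failure probability $\exp(-c|A|/\mathrm{polylog})$ (Proposition~\ref{cG1}), bootstraps this to exponential-time survival of the dual started from a single vertex once it reaches polylogarithmic size (Proposition~\ref{Bigsurv}), and finally runs a second-moment argument over vertices $x$ whose dual survives (Proposition~\ref{cG2}). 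Your scheme on the pruned two-regular graph shares none of this machinery; you have simply not attempted the hard direction.

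On the ``Moreover'' clause itself: notice that your argument never invokes the hypothesis $q\I r<1$ --- you only use $2q<1$, and your conclusion would hold for every $q<1/2$. That should be a red flag. The paper's abstract states that in the subcritical regime the persistence time is \emph{at most} logarithmic, so the intended content of the clause is almost certainly extinction by time $C\log n$ (i.e.\ the displayed $\ne\eset$ is a typo for $=\eset$). That direction is immediate by a first-moment bound through duality: $P_{\sG_n,q}(\xi_t^{[n]}(x)=1)=P_{\sG_n,q}(\ola\xi_t^{\{x\}}\ne\eset)\le (q\I r)^t$, since the dual from one vertex is dominated by a Galton--Watson tree with offspring law $(1-q)\mvgd_0+q\I\vp$ of mean $q\I r<1$; hence $E|\xi_t^{[n]}|\le n(q\I r)^t\to 0$ once $t>\log n/|\log(q\I r)|$. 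This is presumably why the paper does not write out a proof of the clause.

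If one insists on the literal reading (survival for $C\log n$ steps, some small $C$), your second-moment outline is broadly workable but a couple of points need care. Pruning to the two \emph{lowest-labelled} inputs destroys the uniformity you use in (i): the pair of smallest elements of a uniform $I_x$-subset of $[n]\setminus\{x\}$ is not uniform on pairs. Take instead the first two inputs $Y_1^x,Y_2^x$, which \emph{are} a uniform ordered pair. For (ii), out-degrees in the pruned graph are not independent across vertices, so the Galton--Watson domination of the forward cone needs a short sequential-exploration argument; incidentally, the pruned out-degree is approximately Poisson$(2)$, not mean $\I r$, so your bound $n^{1+2C\log\I r}$ is loose (though valid) and the finite-second-moment hypothesis on $\I\vp$ is not actually what controls it.
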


 The above theorem proves \eqref{ph tran2} for the threshold-contact
 process. Next we move to $\RBN^3$.

%

 \begin{thm}\label{thm:RBN4}
 For any probability distribution $\IO\vp=\{\IO p_{k,l}\}_{k, l \geq 0}$ on $\dN^2$, let $\pr_{4,n}$ be the probability distribution
 (as defined in \eqref{P2def}) on the set of random directed graphs on $n$ vertices for which the joint distribution of in-degree and
 out-degree is $\IO\vp$.
 Suppose, both the marginal distributions corresponding to $\IO\vp$ have equal mean $\I r$ and finite second moment, $\IO p_{k,l} = 0$ whenever $k\leq 1$, and $\IO r:=\sum_{k,l} kl\IO p_{k, l}$. Let $q \in (0,1)$ be such that
 $\I r/\IO r < q < 1/2$ and $\pi:=\mvpi(\tilde\vp,q)$ be the branching process survival probability as defined in
 \eqref{pidef}, where $\tilde p_k:= (\I r)^{-1}\sum_{l\geq 0} l \IO p_{k,l}$. Then $\pi>0$ and for any $\eps>0$ there is a constant $\gD(\eps)>0$ and a `good' set of graphs $\cG_n$ satisfying $\pr_{4,n}(\cG_n) =1-o(1)$ such that if $\sG_n \in \cG_n$, then
 \[ P_{\sG_n,q}\left(\inf_{t \le \exp(\gD n)}\frac{|\xi^{[n]}_t|}{n} \ge \pi-\eps\right) \to 1 \text{ as } n\to\infty.\]
 Moreover, if $q$ satisfies $q\I r < 1$, then there is a constant
 $C(q, \I r)>0$ such that $P_{\sG_n,q}(\xi^{[n]}_{C\log n} \ne
 \emptyset)=1-o(1)$ for all $\sG_n \in \cG_n$.
 \end{thm}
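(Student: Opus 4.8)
\textbf{Proof proposal for Theorem \ref{thm:RBN4}.}

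The plan is to reduce the $\RBN^4$ statement to the $\RBN^2$ statement (Theorem \ref{thm:RBN2}) by identifying the right in-degree distribution governing the forward exploration of the threshold contact process. The key structural observation is that in the configuration-model graph for $\RBN^4$, when the process at occupied vertex $x$ attempts to infect its out-neighbours, an out-arrow of $x$ is matched to an in-arrow of a vertex $y$, and a vertex with out-degree $l$ is reached along any of its $l$ out-arrows. Hence, tracing the process \emph{forward} along occupied sites, the relevant branching mechanism has offspring law in which a vertex is size-biased by its out-degree and then contributes a number of further out-arrows equal to its in-degree; equivalently, the in-degree of a typical reached vertex has law $\tilde p_k = (\I r)^{-1}\sum_{l\ge 0} l\,\IO p_{k,l}$, exactly the distribution appearing in the statement. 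Note $\sum_k k\tilde p_k = (\I r)^{-1}\sum_{k,l} kl\,\IO p_{k,l} = \IO r/\I r$, so the condition $\I r/\IO r < q$ is precisely $q\cdot(\text{mean of }\tilde\vp) > 1$, matching the hypothesis $q\,\I r>1$ of Theorem \ref{thm:RBN2} with $\I r$ there replaced by $\IO r/\I r$. That $\tilde\vp$ has finite second moment follows from the finite-second-moment hypotheses on the marginals of $\IO\vp$ together with finiteness of $\IO r$ (by Cauchy--Schwarz, $\sum_k k^2\tilde p_k = (\I r)^{-1}\sum_{k,l} k^2 l\,\IO p_{k,l} < \infty$ when $\sum k^2\,\IO p_{k,\cdot}$ and $\sum l\,\IO p_{\cdot,l}$ are controlled); and $\IO p_{k,l}=0$ for $k\le 1$ gives $\tilde p_0=\tilde p_1=0$, so $\pi=\mvpi(\tilde\vp,q)>0$ by the same branching-process argument as in Theorem \ref{thm:RBN2}.

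For the supercritical (chaotic) half, I would run the same block argument as in the proof of Theorem \ref{thm:RBN2}: one shows that, with $\pr_{4,n}$-probability $1-o(1)$, the graph lies in a good set $\cG_n$ on which (a) a linear-in-$n$ fraction of vertices lie in ``expander-like'' pieces where the forward exploration from any large occupied set dominates a supercritical branching process with offspring law $(1-q)\mvgd_0 + q\tilde\vp$, and (b) a comparison with a suitably defined restricted (discrete-time) threshold contact process on each piece shows that, started from all sites occupied, the density stays above $\pi-\eps$ for a time $\exp(\gD n)$ except on an event of exponentially small probability. The configuration-model local weak limit being the two-stage branching tree described above is standard (Newman--Strogatz--Watts), and conditioning on $F_n$ (simplicity) and $E_n$ changes probabilities by at most a polynomial factor, which is absorbed. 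The main new bookkeeping is that vertices now have heterogeneous out-degrees as well, so the coupling between the graph exploration and the branching process must size-bias correctly on both ends of each half-edge; but this is exactly the content of the $\tilde\vp$ definition, and once it is in place the quantitative estimates (second-moment / isoperimetric bounds, the block/renormalization scheme) transfer verbatim from the $\RBN^2$ proof.

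For the subcritical (ordered) half — the ``moreover'' clause — we are in the regime $q\,\I r < 1$, and here $\I r$ refers to the common marginal mean, which also equals $\OU r$. The claim $P_{\sG_n,q}(\xi^{[n]}_{C\log n}\ne\emptyset)=1-o(1)$ is \emph{not} an extinction statement: with $q\,\I r<1$ the process dies out quickly, so in fact $\xi^{[n]}_t=\emptyset$ for all $t\ge$ some random time that is $O_{\pr}(\log n)$; reading the conclusion at face value, $C\log n$ must be chosen so that with high probability the process has \emph{not yet} reached the empty set by that time, i.e. the persistence time is \emph{at least} of order $\log n$. This I would get from a straightforward lower bound: start from all $n$ sites occupied; at the first step each site survives with probability $q$ independently, so $|\xi^{[n]}_1|$ concentrates around $qn$, and more generally as long as $|\xi^{[n]}_t|$ is still a positive fraction of $n$ the number of occupied-to-occupied ``in-arrow'' hits is large, so $|\xi^{[n]}_{t+1}|$ is at least a constant fraction of $|\xi^{[n]}_t|$ with overwhelming probability (a second-moment / Chernoff estimate using that the expected number of occupied in-neighbours of a vertex is $\ge c\,|\xi_t|/n$ on $\cG_n$). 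Iterating, $|\xi^{[n]}_t|\ge n^{1-\gamma(q,\I r)t/\log n}$ stays $\ge 1$ for all $t\le C\log n$ for a suitable small $C$, so $\xi^{[n]}_{C\log n}\ne\emptyset$ w.h.p. The main obstacle in the whole proof is the supercritical block argument of step two — making the comparison between the genuinely dependent threshold contact dynamics on the configuration model and an independent branching/percolation process rigorous, uniformly over exponentially many time steps, with the extra out-degree heterogeneity — but since Theorem \ref{thm:RBN2} already carries out the same scheme for a prescribed in-degree distribution, the work here is to check that every estimate survives the replacement $\I\vp\rightsquigarrow\tilde\vp$ and the additional conditioning on $E_n\cap F_n$.
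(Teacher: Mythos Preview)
Your high-level reduction is right and is exactly what the paper does: the proofs of Theorems \ref{thm:RBN2} and \ref{thm:RBN4} are given simultaneously, with the only model-dependent inputs being the local-neighbourhood lemmas (Lemma \ref{RBN2} vs.\ Lemma \ref{RBN4}), and the relevant offspring law for $\RBN^4$ is precisely $\tilde\vp$ for the size-biasing reason you give. Your identification of the phase-transition parameter as the mean of $\tilde\vp$, namely $\IO r/\I r$, is also correct.

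The genuine gap is that you have the wrong picture of what the $\RBN^2$ proof is, and the techniques you plan to port do not work here. You describe the supercritical argument as a ``block/renormalization scheme'' with ``isoperimetric bounds'' and a forward comparison of the threshold contact process to a branching process. The paper explicitly rejects this route (Section \ref{sec:disc}): isoperimetric methods from \cite{CD11} are tied to regular graphs and fail for heterogeneous degree sequences. The actual engine is duality. One passes to the dual coalescing branching process $\ola\xi$ on the reversed graph $\ola\sG_n$ (equation \eqref{eqn:eqn-duality}), and the offspring law $\tilde\vp$ arises not from forward infection but as the out-degree law in $\ola\sG_n$ of a vertex reached along a reversed edge (this is \eqref{pdef}). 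The supercritical survival then comes from a delicate deterministic forest coupling (Section \ref{loc nei}): one grows a truncated tree $\{Z^A_t\}$ with a partition into open/closed/removed sites, controls collisions via Lemmas \ref{RBN2}/\ref{RBN4}, and shows (Proposition \ref{cG1}) that on a good graph the open set $O^{A,B}_{K_m}$ grows by a factor $\gg q^{-K_m}$. Lemma \ref{forest est} (a Laplace-transform bound, Lemma \ref{laplace transbd}) then converts this into an exponential lower-tail bound for $|\ola\xi^A_{K_m}|$, which is iterated $e^{\gD n}$ times. A separate second-moment argument (Proposition \ref{cG2}) supplies the density $\pi-\eps$. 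None of this is a block argument in the renormalization sense, and getting the collision/removal bookkeeping and the choice of $K_m$ (equation \eqref{K_mdef}) right is where the work is.

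On the ``moreover'' clause: your reading of the inequality as a \emph{lower} bound on the extinction time is literally what is printed, but note it clashes with the abstract (``persistence time \ldots at most logarithmic''), so you should be suspicious that $\ne$ is a typo for $=$; also the subcritical parameter for $\RBN^4$ should presumably involve $\IO r/\I r$ rather than $\I r$. In either reading the paper does not actually supply a proof of this clause in Section \ref{proof}, so your sketch is not contradicted there, but be aware you may be proving the wrong direction.
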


 The above theorem proves \eqref{ph tran4} for the threshold-contact
 process.

 \subsection{Discussion} \label{sec:disc}
 It is needless to say that the main challenge to prove persistence in the supercritical regime
 for these models is the heterogeneity of the networks that we
 consider. So, the techniques used in \cite{CD11}, which are based
 on ``isoperimetric inequalities" for random regular graphs, and in
 \cite{MV11}, which are applicable only for random regular graphs,
 can not be used. Since the number of subsets of size $O(n)$ is
 super-exponentially large and not much is understood about the
 roles of different subsets of occupied sites in the prognosis of
 the dynamics beyond the level of the ``first moment method", the
 `coupling with truncated branching process' technique used in the article seems to be
 the only effective way in the supercritical regime.

 However, this technique does not work that well in the critical
 regime where the parameters $p, \I r$ and $\IO r$ satisfy the equality in \eqref{ph tran2} and
 \eqref{ph tran4}. Based on the behavior of critical contact process
 on $d$-dimensional torus, one expects polynomial persistence of
 activity in this case.

 \subsection{Organization of the paper}
 The remainder of the paper is organized as follows.
 In Section \ref{prelim}, we describe the dual for the threshold
 contact process, which will play a crucial role in our argument,
 and give quantitative estimates for approximating the local
 neighborhood in the dual graph of certain small subsets of
 vertices. Then in Section \ref{ingredients} we mention some more
 ingredient lemmas which are used later. Section \ref{good graph}
 contains description of the `good' graph that appears in the
 theorems and proof of the fact that it has probability $1-o(1)$.
 Fin ally in Section \ref{proof} we put all the ingredients together
 to have the proof of the main theorems.

 \section{Preliminaries} \label{prelim}
 Before jumping into the core of the proof we need some preliminary
 facts. We begin this section with the definition of the dual process
 for the threshold contact process which will play a major role in
 proving the main results. We also collect asymptotic properties of
 local neighborhoods of the random graph models
 $\set{\RBN^i:i=2,3,4}$.

\subsection{Dual coalescing branching process}
For a given directed graph $\sG_n=([n], \sE_n)$, let $\ola
\sG_n=([n], \ola\sE_n)$ be the directed graph obtained by reversing
the edges, \ie $\ola\sE_n:=\{\la x,y\ra: \la y,x\ra \in \sE_n\}$.
Write $x\to z$ for a directed edge $(x,z) \in \ola\sE_n$. We shall
occasionally refer to $z$ as a child of $x$ in $\ola \sG_n$. Now for
the threshold contact process $\set{\xi_t: t\geq 0}$ on the original
graph $\sG_n$, the dual process $\{\ola\xi_t: t\ge 0\}$, is the
coalescing branching process on the graph $\ola \sG_n$ whose
dynamics we now describe. The process starts from some specified set
of occupied vertices $\ola \xi_0 = B$. For each $t\geq 1$, each site
of $\ola\xi_t$ gives birth independently with probability $q$ at
time $t$. If $x \in \ola\xi_t$ gives birth, all of its children are
included in $\ola\xi_{t+1}$.  More precisely, every vertex $x\in
\ola \xi_t$ gives birth with probability $q$, independent across
vertices. For every $z\in [n]$,  $\ola \xi_{t+1}(z) = 1$ if there
exists $x\in \ola \xi_t$ which gives birth at time $t$, else
$\ola\xi_{t+1}(z) = 0$.  Writing $\set{\ola \xi_t^B: t\geq 0}$ for
the coalescing branching process started with $\ola\xi_0 = B$, it is
easy to check \cite{G79} the following duality relation. For any
$t\geq 0$ and sets $A, B\subseteq [n]$ we have
\begin{equation}
\label{eqn:eqn-duality}
    \pr\left(\xi^A_t \cap B \ne \emptyset \right) = \pr\left( \ola\xi^B_t \cap A \ne \emptyset \right).
 \end{equation}
 This will be our core technical tool in proving the main results
 about the original process $\xi_t$.

We will need the following notation in the sequel. For the graph
$\ola \sG_n$ and $U\subset [n]$ define
 \[ U^{*1} := \{z \in [n]: x \to z \text{ for some } x \in U \}.\]

 \subsection{Local neighborhoods} \label{loc nei}
 Next, we need to understand the structure of the neighborhood of a
 small set of vertices of $\ola \sG_n$. The goal is to see whether the oriented neighborhood of a typical small vertex set
 contain an oriented forest whose offspring distribution is close to
 the out-degree distribution in $\ola\sG_n$.

 For $A \subset [n]$ we let $\ola Z^A_0=A$ and for $l\ge 1$ let
 \[ \ola Z^A_l:=\{z \in [n]: \text{ there is an oriented path in $\ola\sG_n$ of length $l$ from some $x \in A$ to $z$}
 \}. \]
 Let $\{K_m\}_{m=1}^n$ be a sequence of numbers which will be specified later (see \eqref{K_mdef}).
 Using $\sqcup$ to denote disjoint union, we introduce the following coupling between the directed subgraph of $\ola\sG_n$ induced by $\cup_{i= 0}^{K_{|A|}} \ola Z^A_i$
 and a tree
 $\{Z^A_t, 0\le t\le K_{|A|}\}$ along with partitions $Z^A_t=C^A_t\sqcup O^A_t \sqcup R^A_t$, where $C^A_t$, $O^A_t$ and $R^A_t$ represent
 `closed', `open' and `removed' sites at level $t$ respectively.
 Let $A=\{u_1, \ldots, u_{|A|}\}$.
 For the root level of the tree we choose $Z^A_0=A$ and $C^A_0=R^A_0=\emptyset$. The sites in $Z^A_0$ are labeled $u_1, \ldots, u_{|A|}$.
 For each $t\ge 0$ every site of $O^A_t$ mimics the corresponding vertex in $\ola Z^A_t$ with same label, and so a site of $O^A_t$  having label
 $u$ gives birth to $I_u$ many children at level $t+1$. The new born sites at level $t+1$ are
 assigned the same labels following those of $\ola Z^A_{t+1}$.
 Writing
 \[ \Pi(l,A) \text{ for the subset of $A$ consisting of $l\wedge |A|$ elements with minimum
 indices},\]
 we scan the sites of $Z^A_{t+1}$ in an increasing order of labels, and
 define
 \[R^A_{t+1} := Z^A_{t+1}\setminus\Pi(2r |O^A_t|,Z^A_{t+1}).\]
 For a site in $Z^A_{t+1} \setminus R^A_{t+1}$, we say that a ``collision" has occurred if its label either matches with that of a site in $\cup_{s=0}^t O^A_s$, or has already been found while scanning the sites of level $t+1$.
 We include all of these
 sites in $C^A_{t+1}$. If collision does not occur at a site, we include that in $O^A_{t+1}$.

 For $u\in Z^A_t$, let $\ola u^t\in A$ denote the label of the unique ancestor of $u$ having level 0.
 For any subset $B\subset A$ and $t\ge 1$ let $Z^{A,B}_0=O^{A,B}_0=B$ and
 \begin{align*}
 Z^{A,B}_t & := \{u\in Z^A_t: \ola u^t \in B\} \\
 C^{A,B}_t & := \Pi\left(2r|O^{A,B}_{t-1}|, Z^{A,B}_t\right) \cap C^A_t,
 O^{A,B}_t:=\Pi\left(2r|O^{A,B}_{t-1}|, Z^{A,B}_t\right) \cap O^A_t.
 \end{align*}

 Each site in $O^A_t$ corresponds to a unique vertex in $\ola Z^A_t$ with the same label. Note that this map from $O^A_t$ to $\ola Z^A_t$ may not be onto because of collisions and removal of sites.

 The law of $\ola\sG_n$ induces the law of $\{Z^A_t\}$ along with its partitions.
 We identify these two laws. Now our aim is to estimate the probability of collision, and then understand the offspring distribution
 in the above forest. We write
 \begin{align}
 & \ol I_n:= \frac 1n \sum_{z=1}^n I_z, \quad \ol{I_n^2}:=\frac 1n \sum_{z=1}^n
 I_z^2, \quad
 \ol O_n:= \frac 1n \sum_{z=1}^n O_z, \quad \ol{O_n^2}:=\frac 1n \sum_{z=1}^n
 O_z^2 \quad \ol{IO_n}:= \frac 1n \sum_{z=1}^n I_zO_z, \\
 & \gth(m) := m + m\sum_{l=1}^{K_m} (2r)^l.  \label{gthdef}
 \end{align}
 For $\eta \in (0,1)$ and any probability distribution
 $\mvmu$, define
 \beq \label{invdist}
 \Gamma(\eta,\mvmu) := \int_0^\eta \mvmu^\leftarrow(1-t)\; dt,
 \text{ where } \mvmu^\leftarrow(t) := \inf\{y \in \dR:
 \mvmu((-\infty, y]) \ge t\} \text{ for } t \in [0,1].\eeq
 Recall from  Section \ref{model} that $I_x$ denotes the in-degree of $x$ and $\{Y^x_1, \ldots, Y^x_{I_x}\}$
 denotes the set of input nodes for $x$ in $\sG_n$.

 \begin{lem}[For $\RBN^2$] \label{RBN2}
 If $\I\vp$ has finite second moment,
 then  there is a constant $C_{\sref{RBN2}}>0$ and a set of in-degree sequences $\sI_n \subset \dN^n$ such that
 $\I\vp_{\otimes n}(\sI_n)=1-o(1)$ and  $\vI \in \sI_n$ implies
 \beqax
 (1) && \pr_{2,\vI}( x \in C^A_t) \le 2\gth(|A|)/n \text{ for any } A \subset [n], x \in Z^A_t \setminus R^A_t \text{ and } 1\le t\le K_{|A|}       \\
 (2) && \pr_{2,\vI}\left[(Y^x_i, x \in [n], 1\le i\le I_x) \in
 \cdot\right] \le C_{\sref{RBN2}} \pr_{2,\vI}\left[(\tY^x_i, x \in [n], 1\le i\le I_x) \in
 \cdot\right], \eeqax
 where $\{\tY^x_i\}_{x\in[n], i\le I_x}$ are \iid with common distribution
 $Uniform([n])$.
  \end{lem}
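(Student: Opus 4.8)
The plan is to establish the two assertions in turn, after first fixing a good set of in-degree sequences. I would take $\sI_n$ to be the set of $\vI\in\dN^n$ on which (a) $\ol{I_n^2}=\frac1n\sum_{z=1}^n I_z^2\le 2\,\E_{\I\vp}[I^2]$ and (b) $\max_{z\le n}I_z\le \eps_n\sqrt n$, for a deterministic sequence $\eps_n\downarrow 0$ chosen below. Since $\I\vp$ has finite second moment, the weak law of large numbers gives $\I\vp_{\otimes n}$-probability $1-o(1)$ for (a); and $\pr_{\I\vp}(I>\eps\sqrt n)=o(n^{-1})$ for each fixed $\eps>0$, so a union bound over $z\le n$ and the usual diagonalization produce $\eps_n\downarrow 0$ for which (b) has probability $1-o(1)$. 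Thus $\I\vp_{\otimes n}(\sI_n)=1-o(1)$, and on $\sI_n$ one has $n-I_z\ge n/2$ and $I_z/n\le\tfrac12$ for all $z$ once $n$ is large.

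For part (1), run the exploration of $\ola\sG_n$ from $A$ together with its coupled forest $\{Z^A_t\}$, revealing the input set $Y^u$ only at the step at which the label $u$ is opened; then $Y^u$ is queried at most once and, conditionally on the history at that step, is a uniformly random set of $I_u$ distinct elements of $[n]\setminus\{u\}$. The truncation rule forces the deterministic bound $|O^A_s|\le(2r)^s|A|$ for all $s$, and hence $|\bigcup_{s=0}^{t-1}O^A_s|+2r|O^A_{t-1}|\le |A|\sum_{s=0}^{t}(2r)^s\le\gth(|A|)$ whenever $t\le K_{|A|}$. Now fix a level-$t$ site $x$, say the $i$th child of the open level-$(t-1)$ site $u$, and condition on everything determined through level $t-1$ together with the labels of all level-$t$ child-slots other than $x$'s; given this, the label of $x$ is uniform on a set of $n-I_u$ vertices. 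The event $\{x\in C^A_t\}$ (which already entails $x\notin R^A_t$) forces the label of $x$ into a set $F$ measurable with respect to this conditioning, namely $\bigcup_{s=0}^{t-1}O^A_s$ together with those distinct values $w$ that occur among the other level-$t$ child labels and that would leave the $i$th child of $u$ unremoved if it equalled $w$; a short rank argument bounds the number of such $w$ by $2r|O^A_{t-1}|$, so $|F|\le\gth(|A|)$. Averaging over the conditioning gives $\pr_{2,\vI}(x\in C^A_t)\le |F|/(n-I_u)\le\gth(|A|)/(n/2)=2\gth(|A|)/n$ on $\sI_n$.

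For part (2), recall that, given the in-degrees, the conditional law of $(Y^x_i)_{x\in[n],\,i\le I_x}$ is the product over $x$ of the uniform law on ordered $I_x$-tuples of distinct elements of $[n]\setminus\{x\}$, while the law $\nu$ of $(\tY^x_i)$ is the product over $x$ of the uniform law on $[n]^{I_x}$; the former is $\nu$ conditioned on the event $G$ that every tuple is injective and avoids its own index. Since $\pr_{2,\vI}$ is supported on $G$ and uniform there, its density with respect to $\nu$ equals $\prod_{x=1}^n\prod_{j=1}^{I_x}(1-j/n)^{-1}$ on $G$ and $0$ off $G$; hence $\pr_{2,\vI}(\cdot)\le C_{\sref{RBN2}}\,\nu(\cdot)$ with $C_{\sref{RBN2}}:=\prod_x\prod_{j=1}^{I_x}(1-j/n)^{-1}$. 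Using $-\log(1-y)\le y/(1-y)$ and $\sum_{j=1}^{I_x}j\le I_x^2$ gives $\log C_{\sref{RBN2}}\le (1-\max_z I_z/n)^{-1}\,\ol{I_n^2}$, which on $\sI_n$ is at most $4\,\E_{\I\vp}[I^2]$; so $C_{\sref{RBN2}}:=\exp(4\,\E_{\I\vp}[I^2])$ works uniformly over $\vI\in\sI_n$, and both displayed statements hold for all such $\vI$.

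I expect the main obstacle to be the collision estimate (1): one must arrange the conditioning so that the freshly revealed label is uniform over a set of size $\Theta(n)$ while the set $F$ it must avoid is simultaneously measurable with respect to that conditioning and of size at most $\gth(|A|)$. The crucial point is that the relevant count is $2r|O^A_{t-1}|$ — the number of unremoved level-$t$ sites — rather than the uncontrolled total child count $\sum_{u'\in O^A_{t-1}}I_{u'}$; this is precisely what the truncation rule, combined with the rank argument, delivers. Everything else — the geometric bound $|O^A_s|\le(2r)^s|A|$, the telescoping into $\gth(|A|)$, and the law-of-large-numbers estimates defining $\sI_n$ — is routine bookkeeping.
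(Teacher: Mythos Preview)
Your proof is correct and follows essentially the same route as the paper's. The paper chooses $\sI_n=\{\max_z I_z\le n^{3/4},\ \ol{I_n}<2\E I,\ \ol{I_n^2}<2\E I^2\}$ and handles (1) in one line by noting that $\sum_{s\le K_{|A|}}|Z^A_s\setminus R^A_s|\le\gth(|A|)$ and that each fresh label is uniform on a set of size $\ge n-\max_z I_z$; your more explicit conditioning and rank argument just fills in that sentence, and your treatment of (2) via the Radon--Nikodym derivative is the same as the paper's lower bound on the distinctness event, rewritten from the other side.
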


 \begin{proof} We take
 \[ \sI_n := \left\{\max_z I_z \le n^{3/4}, \ol I_n <c1, \ol{I_n^2} < c_2\right\}, \]
 where $c_i=2\sum_k k^i\I p_k$. Obviously $\I\vp_{\otimes n}(\sI_n)=1-o(1)$.

 \noindent
 (1). Note that $\sum_{t=0}^{K_|A|}|Z^A_t\setminus R^A_t| \le \gth(|A|)$. So if $\vI \in \sI_n$, then it is easy
 to see from the construction of $\sG_n$ under the law $\pr_{2,\vI}$ that for any $1\le t\le K_{|A|}$ and $x \in Z^A_t \setminus R^A_t$,
 \[ \pr_{2,\vI}(x \in C^A_t) \le \frac{\gth(|A|)}{n-\max_z I_z} \le 2\gth(|A|)/n. \]

 \noindent
 (2). It is easy to see that
 \[ \left(Y^x_i, 1\le i\le I_x, x \in [n]\right) \eqd
  \left(\left.\tY^x_i, 1\le i\le I_x, x \in [n]\right| \tY^z_i \ne \tY^z_j \ne z \forall z \in [n], i \ne
 j\right),\]
 so it suffices to show that $\vI \in \sI_n$ implies $\pr(\tY^z_i \ne \tY^z_j \ne z \forall z \in [n], i \ne j) \ge c$ for some constant.
 Using the inequality $1-x\ge e^{-2x}$ for small $x>0$, we see that
 if $\vI \in \sI_n$, then
 \beqx \pr_{2,\vI}(\tY^z_i \ne \tY^z_j \ne z \forall
 z \in [n], i \ne j) =\prod_{z=1}^n \prod_{i=1}^{I_z} (1-i/n) \ge
 \exp\left(-2\sum_{z=1}^n \sum_{i=1}^{I_z} (i/n)\right)=\exp(- \ol I_n-\ol{I_n^2}) \ge e^{-c_1-c_2}
 \eeqx
 for large enough $n$.
 \end{proof}

 \begin{lem}[For $\RBN^4$] \label{RBN4}
 \begin{enumerate}
 \item[] If the marginal distributions $\I\vp$ and $\OU\vp$ have
 finite second moment, then there is a constant $C_{\sref{RBN4}}>0$ and a set of degree sequences
 $\sA \subset (\dN^2)^n$ such that
 $\IO\vp_{\otimes n}(\sA| E_n)=1-o(1)$ and for all $(\vI,\vO) \in \sA$,
 \beqax
 (1) && \pr_{4,\vI,\vO}(x \in C^A_t) \le 2\Gamma(2\eps,\OU\vp)
 \text{ for any } A \subset [n], x \in Z^A_t \setminus R^A_t \text{ and } 1\le t\le K_{|A|} \\
 (2) && \pr_{4,\vI,\vO}\left[\left.(Y^x_i, 1\le i \le I_x, x \in [n]) \in
 \cdot\right| F_n \right] \\
 && \le  C_{\sref{RBN4}} \pr_{4,\vI,\vO}\left[\left.\left(\tY^x_i, 1\le i \le I_x, x \in [n]\right)\in \cdot\right|
 \sum_{x=1}^n\sum_{i=1}^{I_x} \mathbf 1_{\{\tY^x_i = z\}} =O_z \forall z \in [n]
 \right],
 \eeqax
 where $\{\tY^x_i\}$
 are \iid with common distribution $\sum_{z\in[n]} O_z \mvgd_z/\sum_{x\in[n]} O_x$.
 Moreover, if $\OU p_k \sim ck^{-\ga}$ for some $\ga>2$, then $\Gamma(\eta,\OU\vp)$
 has the same behavior as in Lemma \ref{RBN3}.
 \item[] If $N=rn$ for some constant $r$ and $\{\tY_i\}_{i=1}^N$ are \iid with common distribution
 $Multinomial(1; \ga_1, \ga_2, \ldots, \ga_n)$, then for any small $\eps > 0$,
 \beqax
 (3) && P\left(\left(\tY_1, \ldots, \tY_{\eps N}\right) \in \cdot\left|
 \sum_{i=1}^N \mathbf 1_{\{\tY_i = z\}} =N\ga_z \forall z \in [n]\right.\right)
 \le (1+o(1)) \exp(-\eps\log(1-\eps) N) P\left(\left(\tY_1, \ldots, \tY_{\eps n}\right) \in
 \cdot \right) \text{ for some constant $C$}, \\
 (4) && \left|\left|P\left(\left(\tY_1, \ldots, \tY_{\eps n}\right) \in \cdot\left|
 \sum_{i=1}^N \mathbf 1_{\{\tY_i = z\}} =N\ga_z \forall z \in [n]\right.\right)
 -  P\left(\left(\tY_1, \ldots, \tY_{\eps n}\right) \in
 \cdot \right)\right|\right|_{TV} \le O(\eps^2 n)+o(1).
 \eeqax
 \end{enumerate}
 \end{lem}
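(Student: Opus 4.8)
The first step is to fix the good degree-sequence set. For a constant $c$ exceeding $2\sum_{k,l}(k+l+k^{2}+l^{2}+kl)\IO p_{k,l}$ I would take
\[
\sA:=\Bigl\{\max_{z\le n}(I_z+O_z)\le \sqrt n,\ \ \ol I_n,\ \ol O_n\in[1,\,c],\ \ \ol{I_n^2},\ \ol{O_n^2},\ \ol{IO_n}\le c\Bigr\},
\]
and verify $\IO\vp_{\otimes n}(\sA)=1-o(1)$ from finiteness of the second moments of the two marginals, the weak law of large numbers, and the bound $x^{2}\pr(I_1\ge x)\to0$; then, since $\pr(E_n)\ge c'n^{-1/2}$ by the local central limit theorem while the complement of $\sA$ stays $o(1)$ after conditioning on $E_n$ (by first truncating the degrees, a routine step), I would conclude $\IO\vp_{\otimes n}(\sA\mid E_n)=1-o(1)$. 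From now on fix $(\vI,\vO)\in\sA$ and argue under $\pr_{4,\vI,\vO}$, using that $\sG_n$ arises from a uniformly random matching of the $\sum_z I_z$ in-stubs to the $\sum_z O_z=\sum_z I_z$ out-stubs, so a child of $x$ in $\ola\sG_n$ is a vertex one of whose out-stubs is matched to an in-stub of $x$.

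For part (1) I would reveal this matching in the breadth-first order of the exploration of $\ola\sG_n$ from Section~\ref{loc nei}, one in-stub at a time: conditionally on the past, the out-stub to which the current in-stub is matched is uniform over the not-yet-matched out-stubs. A collision at $x\in Z^A_{t+1}\setminus R^A_{t+1}$ occurs exactly when the revealed out-stub belongs to a vertex whose label already lies in $\bigcup_{s\le t}O^A_s$ or was scanned earlier at level $t+1$, and the number of out-stubs attached to such vertices is at most $\sum_{u\in\cU}O_u$ with $|\cU|\le\sum_{s\le K_{|A|}}|Z^A_s\setminus R^A_s|\le\gth(|A|)$. By the calibration of $K_m$ in \eqref{K_mdef} one has $\gth(|A|)\le\eps n$ over the range where the claim is non-vacuous, so $\sum_{u\in\cU}O_u$ is at most the sum of the $\lfloor\eps n\rfloor$ largest out-degrees; on $\sA$ the empirical out-degree law converges to $\OU\vp$ with uniformly integrable square, so by the definition \eqref{invdist} of $\Gamma$ this sum is $\le(1+o(1))\,n\,\Gamma(2\eps,\OU\vp)$. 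Since the number of unmatched out-stubs is always $\ge\sum_z O_z-\gth(|A|)\ge n(1-\eps)$ (on $\sA$, $\ol O_n\ge1$), the conditional — hence also the unconditional — probability that $x\in C^A_t$ is $\le2\Gamma(2\eps,\OU\vp)$, which is (1).

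For part (2) I would invoke the standard description of the directed configuration model: listing, for each in-stub $(x,i)$ with $1\le i\le I_x$, the label of the vertex whose out-stub it is matched to yields a sequence distributed as the \iid sequence $(\tY^x_i)$ with common law $\sum_z O_z\mvgd_z/\sum_x O_x$, conditioned on the count event $\{\sum_{x,i}\ind_{\{\tY^x_i=z\}}=O_z\ \forall z\}$ and then on $F_n$. Dropping the last conditioning multiplies probabilities by $1/\pr_{4,\vI,\vO}(F_n\mid\text{counts})$, which by the classical estimates of Janson and McKay--Wormald is bounded by a finite constant $C_{\sref{RBN4}}$ uniformly over degree sequences with $\sum_z(I_z^2+O_z^2)=O(n)$ and $\sum_z I_zO_z=O(n)$, both of which hold on $\sA$; that gives (2). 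For the power-law remark, if $\OU p_k\sim ck^{-\ga}$ with $\ga>2$ then $\OU\vp([k,\infty))\sim c'k^{1-\ga}$, whence $(\OU\vp)^{\leftarrow}(1-t)\asymp t^{-1/(\ga-1)}$ and $\Gamma(\eta,\OU\vp)=\int_0^\eta(\OU\vp)^{\leftarrow}(1-t)\,dt\asymp\eta^{(\ga-2)/(\ga-1)}$, matching Lemma~\ref{RBN3}.

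Finally, for (3)--(4) I would observe that conditioned on the count event $(\tY_1,\ldots,\tY_N)$ is a uniformly random arrangement of the multiset with $N\ga_z$ copies of $z$, so $(\tY_1,\ldots,\tY_m)$ is a size-$m$ sample without replacement whereas the unconditioned one is a sample with replacement. For (3) (with $m=\eps N$) the Radon--Nikodym derivative of the former with respect to the latter at $(z_1,\ldots,z_m)$ equals $\prod_{i=1}^m\frac{N\ga_{z_i}-\#\{j<i:\,z_j=z_i\}}{(N-i+1)\,\ga_{z_i}}\le\prod_{i=0}^{m-1}\frac{N}{N-i}=\exp\bigl(-\sum_{i=0}^{m-1}\log(1-i/N)\bigr)$, and since $\sum_{i=0}^{m-1}\log(1-i/N)\ge N\int_0^\eps\log(1-t)\,dt=-N\bigl[(1-\eps)\log(1-\eps)+\eps\bigr]\ge N\eps\log(1-\eps)$ (the last step being $\log(1-\eps)\le-\eps$), this derivative is $\le\exp(-\eps\log(1-\eps)N)$, giving (3). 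For (4) (with $m=\eps n$) I would couple the two samples to agree until the first repeat in the with-replacement sample — legitimate because the two laws coincide when conditioned on no repeat among the first $k$ draws — so the total variation distance is at most the repeat probability $\binom{m}{2}\sum_z\ga_z^2$; on $\sA$, $\sum_z\ga_z^2=\sum_z O_z^2/(\sum_x O_x)^2=\ol{O_n^2}/\bigl(n(\ol O_n)^2\bigr)=O(1/n)$, so this is $O(\eps^2 n)$ up to an $o(1)$, giving (4). I expect the main obstacle to be part (1): one must run the exploration so that each revealed out-stub is genuinely uniform over the unrevealed ones — this is precisely the purpose of the removal sets $R^A_t$ and of the cutoff $K_m$ of \eqref{K_mdef}, which keep the already-seen label set below $\eps n$ — and then bound the extreme value of $\sum_{u\in\cU}O_u$ by $n\Gamma(2\eps,\OU\vp)$ using only the first-moment convergence and uniform square-integrability of the empirical out-degree law available on $\sA$; everything else reduces to standard facts about the directed configuration model and about sampling with versus without replacement.
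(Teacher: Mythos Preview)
Your treatment of parts (2), (3), and (4) is sound, and in fact (3)--(4) are handled more directly than in the paper: the paper rewrites both the conditioned and unconditioned multinomial laws via the Poisson representation and computes the likelihood ratio through independent Poisson point probabilities and Stirling's formula, whereas your sampling-with-versus-without-replacement computation reaches the same bounds with less machinery. One small wrinkle in (4): the coupling that makes the two laws agree until a ``repeat'' is a repeat of a \emph{ball} in the size-$N$ multiset, not of a \emph{label}; the correct failure probability is $\binom{m}{2}/N$, and your $\binom{m}{2}\sum_z\ga_z^2$ is merely an overestimate of that (harmless here since $\sum_z\ga_z^2\ge 1/N$ whenever each $N\ga_z\ge 1$).

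The genuine gap is in part (1). Your set $\sA$ constrains only empirical moments ($\ol O_n$, $\ol{O_n^2}$, $\max_z O_z$), and the sentence ``on $\sA$ the empirical out-degree law converges to $\OU\vp$ with uniformly integrable square'' is a probabilistic statement about a \emph{random} element of $\sA$, not a bound valid for every fixed $(\vI,\vO)\in\sA$. But claim (1) must hold for \emph{all} $(\vI,\vO)\in\sA$, and moment constraints alone do not force the sum of the top $\eps n$ out-degrees to be at most $n\,\Gamma(2\eps,\OU\vp)$: a degree sequence can match the first two empirical moments of $\OU\vp$ while having an entirely different upper tail, so the inequality you assert simply fails for some members of your $\sA$. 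The paper handles this by building the order-statistic constraint \emph{into} the definition of the good set,
\[
\sA_n:=E_n\cap\Bigl\{\tfrac{1}{\sum_z O_z}\sum_{i=1}^{\gth(|A|)}O_{n,n-i+1}\le 4\Gamma(2\eps,\OU\vp)/\OU r\Bigr\},
\]
and then proving $\IO\vp_{\otimes n}(\sA_n\mid E_n)=1-o(1)$ via an $L^2$ estimate for smooth linear functionals of order statistics (Stigler's theorem, \cite{S74}) together with Chebyshev. This is precisely the step your moment-based $\sA$ cannot supply. To repair your argument you must either enlarge $\sA$ to include such an order-statistic bound and prove it has high probability, or settle for the cruder Cauchy--Schwarz bound $\sum_{\text{top }\eps n}O_z\le n\sqrt{c\eps}$ that your $\sA$ actually yields --- but the latter gives exponent $1/2$ rather than $(\ga-2)/(\ga-1)$ and would not feed correctly into the choice of $\gc$ in Proposition~\ref{cG1}.
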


 \begin{proof}
 For $\gth(\cdot)$ as in \eqref{gthdef} and $\Gamma(\cdot,\cdot)$ as in \eqref{invdist} we take
 \[ \sA_n := E_n \cap \left\{\frac{\sum_{i=1}^{\gth(|A|)} O_{n,n-i+1} }{\sum_z O_z} \le 4\Gamma(2\eps, \OU\vp)/\OU r \right\}, \]
 where $O_{n,1} \le O_{n,2} \le \cdots \le O_{n,n}$ are the order statistics for $O_1, \ldots, O_n$.
 In order to prove $\IO\vp_{\otimes n}(\sA_n| E_n)=1-o(1)$,
 we apply Theorem 1 of \cite{S74} for the function
 \[J(t) = \begin{cases}
          0 & \text{ for } 0\le t\le 1-2\eps \\
          1 & \text{ for } 1-\eps \le t\le 1 \\
          (t-1)/\eps+2 \text{ for } 1-2\eps \le t\le 1-\eps
 \end{cases}.
 \]
 and the \iid random variables $O_1, \ldots, O_n$.
 Since $\OU\vp$ has finite second moment, it can be checked easily that
 the quantity given in (10) of \cite{S74}, $\gs^2(J,\OU\vp)$ is
 finite. This together with Theorem 4 of \cite{S74} implies
 \[ \IO E_{\otimes n}(S_n-\mu)^2 =O(1/n), \text{ where }\
  S_n := \frac 1n \sum_{i=1}^{n} J(i/(n+1)) O_{n,i} \text{ and } \mu := \int_0^1 J(t) (\OU\vp)^\leftarrow(t) \; dt
 \]
 is as in (11) of \cite{S74}.
 Note that
 \[ \int_{1-\eps}^1 (\OU\vp)^\leftarrow(t)\; dt \le \mu \le \int_{1-2\eps}^1 (\OU\vp)^\leftarrow(t)\; dt, \text{ which means } \Gamma(\eps,\OU\vp) \le \mu \le \Gamma(2\eps,\OU\vp).\]
 Consequently, using Chebyshev inequality
 \[ \IO\vp_{\otimes n}(S_n > 2\mu) =O(1/n),\]
 which in turn implies
 \[ \IO\vp_{\otimes n}\left(\frac 1n \sum_{i=1}^{\gth(|A|)} O_{n,n-i+1} > 2\Gamma(2\eps,\OU\vp)\right) =O(1/n),\]
 and hence
 \[ \IO\vp_{\otimes n}\left(\left.\frac 1n \sum_{i=1}^{\gth(|A|)} O_{n,n-i+1} > 2\Gamma(2\eps,\OU\vp)\right| E_n\right) \le O(\sqrt n) \IO\vp_{\otimes n}\left(\sum_{i=1}^{\gth(|A|)} O_{n,n-i+1} > 2\Gamma(2\eps,\OU\vp)\right) =
 o(1).\]
 The last inequality follows from the fact that $\IO\vp_{\otimes n}(E_n)=O(1/\sqrt n)$ by the local central limit theorem.

 On the other hand,
 using Chebyshev inequality,
 \[ \IO\vp_{\otimes n}\left(\ol O_n \le \OU r/2 \right) = O(1/n), \text{ and so } \IO\vp_{\otimes n}\left(\left.\ol O_n \le \OU r/2 \right| E_n \right) = o(1).\]
 Combining the last display with \eqref{orderstattail} we see that $\IO\vp_{\otimes n}(\sA)=1-o(1)$.

 \noindent
 (1). It is easy to see from the construction of $\sG_n$ that each site of $Z^A_t$ has label $z$ with probability
 $\le O_z/\sum_i O_i$. So, if we write the labels of the sites in
 $\cup_{t=1}^{K_{|A|}} Z^A_t \setminus R^A_t$ in an increasing order, then a collision can occur at the $k$-th site (in this
 ordering) with probability $\le
 \sum_{i=1}^{|A|+k-1} O_{n,n-i+1}/\sum_z O_z$. Therefore, for any $1\le t\le
 K_{|A|}$ and $x\in Z^A_t \setminus R^A_t$
 \[ \pr_{4,\vI,\vO}(x \in C^A_t) \le \frac{\sum_{i=1}^{\gth(|A|)} O_{n,n-i+1}}{\sum_z
 O_z}\]
 as $|\cup_{t=1}^{K_{|A|}} Z^A_t \setminus R^A_t| \le
 \gth(|A|)-|A|$. So the assertion follows from the definition of $\sA$.

 \noindent
 (2). We can imitate the argument of Theorem 3.1.2 of \cite{D07} to
 see that under $\pr_{4,\vI,\vO}$ the number of self-loops and
 multiple edges are asymptotically independent, and both of them
 have asymptotic Poisson distribution whose mean is a function of the moments $\sum_{k,l} k^il^j \IO p_{k,l}, i, j \in \{0, 1, 2\}$.
 So $\pr_{4,\vI,\vO}(F_n)$ has a positive limit. This together with
 the fact that
 \[ \left(Y^x_i, 1\le i\le I_x, x \in [n]\right) \eqd
  \left(\left.\tY^x_i, 1\le i\le I_x, x \in [n]\right| \sum_{x=1}^n\sum_{i=1}^{I_x} \mathbf 1_{\{\tY^x_i = z\}} =O_z \forall z \in [n]\right).\]
 gives the desired inequality.

 \noindent
 (3). For a vector of positive integers $\vy$, we write $X_i(\vy)$ for
 the number of components of $\vy$ which are $i$. We also write
 $\tilde\vY=(\tY_1, \ldots, \tY_N)$ and $\tilde\vY_{a:b}=(\tY_a,
 \tY_ {a+1}, \ldots, \tY_b)$.

 For any and $\vy \in [n]^{\eps N}$,
 \beq
 \frac{P\left(\tilde Y_{1:\eps N}=\vy\left| X_z(\tilde\vy) = N\ga_z
 \forall z\in[n]\right.\right)}{P\left(\tilde Y_{1:\eps
 N}=\vy\right)}
  = \frac{P\left(X_z(\tilde Y_{(\eps N + 1):N}) =
 N\ga_z-X_z(\vy) \forall z\in[n]\right)}{P\left(X_z(\tilde Y) = N\ga_z
 \forall z\in[n]\right)}. \label{fracbd1}
 \eeq
 In order to bound the fraction in \eqref{fracbd1} recall that
 \[ Multinomial(N; \ga_1, \ldots, \ga_n) \eqd \left(Y_1, \ldots,
 Y_n\left| \sum_{i=1}^n Y_i = N\right.\right), \]
 where $\{Y_i\}_{i=1}^n$ are independent and $Y_i \sim
 Poisson(N\ga_i)$. In that case, $P(\sum_{i=1}^n Y_i = N) =
 (1+o(1))/\sqrt{2\pi N}$ by Stirling's formula. So, the ratio in \eqref{fracbd1} is
 \beq \label{fracbd2}
  (1+o(1)) \sqrt{1-\eps} \prod_{z\in [n]}
 \frac{P(Y_z=N\ga_z)}{P(Z_z=N\ga_z)}, \eeq
 where $Y_i \sim Poison(N(1-\eps)\ga_i), Z_i \sim Poison(N\ga_i),
 i\in[n],$ and they are independent. The expression in the last
 display equals
 \begin{align}
 & (1+o(1))\sqrt{1-\eps} \prod_{z=1}^n \frac{(N\ga_z)!}{e^{-N\ga_z} (N\ga_z)^{N\ga_z}} \cdot \frac{e^{-(N\ga_z(1-\eps))} (N\ga_z(1-\eps))^{(N\ga_z-X_z(\vy))}}{(N\ga_z-X_z(\vy))!} \notag \\
 & = (1+o(1))\sqrt{1-\eps} [e^\eps(1-\eps)]^N \exp\left(-log(1-\eps) \eps N\right)\prod_{z=1}^n \prod_{i=1}^{X_z(\vy)} \left(1-\frac{i-1}{N\ga_z}\right). \label{fracbd3}
 \end{align}
Using the inequality $1-\eps \le e^\eps$ we get the desired bound.

%
%

 \noindent
 (4). Using the bound of part (3), the total variation distance between these two measures is
 \begin{align}
 \frac 12 \sum_{\vy \in [n]^{\eps N}} P\left(\tilde \vY_{1:\eps N}=\vy\right)\left|
 \frac{P\left(\tilde \vY_{1:\eps N}=\vy\left| X_z(\tilde\vY_{1:N}) = N\ga_z
 \forall z\in[n]\right.\right)}{
 P\left(\tilde \vY_{1:\eps N}=\vy\right)}-1\right| \notag \\
 \le \exp(-\eps\log(1-\eps) N) P\left(\left(\tilde\vY_{1:\eps N}\right) \in A^c\right)
 + \sup_{\vy \in A} \left|
 \frac{P\left(\tilde\vY_{1:\eps N}=\vy\left| X_z(\tilde\vY_{1:N}) = N\ga_z
 \forall z\in[n]\right.\right)}{
 P\left(\tilde \vY_{1:\eps N}=\vy\right)}-1\right| \label{fracbd4}
 \end{align}
 for any set $A$. Now recall from \eqref{fracbd3} that
 \beq \frac{P\left(\tilde Y_{1:\eps N}=\vy\left| X_z(\tilde\vy) = N\ga_z
 \forall z\in[n]\right.\right)}{P\left(\tilde Y_{1:\eps
 N}=\vy\right)}
 = (1+o(1))\sqrt{1-\eps} [e^\eps(1-\eps)]^N \exp\left(-log(1-\eps) \eps N\right)\prod_{z=1}^n \prod_{i=1}^{X_z(\vy)}
 \left(1-\frac{i-1}{N\ga_z}\right)
 \label{fracbd5} \eeq
 Since $e^\eps \ge 1-\eps$, the first term in the right hand side of \eqref{fracbd5} lies between $1-\eps^2N$ and 1, whereas the second term lies between
 $\exp(\eps^2N)$ and $\exp(2\eps^2 N)$ when $\eps>0$ is small. Also the product term in \eqref{fracbd5} lies between 1 and
 \[ 1-\sum_{z=1}^n \sum_{i=1}^{X_z(\vy)} \frac{i-1}{N\ga_z} = 1 - \sum_{z=1}^n \frac{X_z(\vy)(X_z(\vy)-1)}{2N\ga_z}.\]
 Consequently, if we take
 \[ A_\eta:=\left\{\vy \in [n]^{\eps N}: \sum_{z=1}^n \frac{X_z(\vy)(X_z(\vy)-1)}{2N\ga_z} < \eta\eps^2 N\right\},\]
 then
 \[ \left|
 \frac{P\left(\tilde\vY_{1:\eps N}=\vy\left| X_z(\tilde\vY_{1:N}) = N\ga_z
 \forall z\in[n]\right.\right)}{
 P\left(\tilde \vY_{1:\eps N}=\vy\right)}-1\right|
 = O(\eps^2 N)\]
 whenever $\vy \in A_\eta$. So, in view of \eqref{fracbd4}, it suffices to show that
 $P(\tilde\vY_{1:\eps N} \in A_\eta^c)=O(\exp(-C\eps N))$ for some constant $C>0$ and for some suitable choice of $\eta$.

 Note that the joint distribution of $\{X_z(\tilde\vY_{1:\eps N})\}_{z\in [n]}$ is $Multunomial(\eps N; \ga_1, \ldots, \ga_n)$, so using \eqref{mult} and local central limit theorem
 \[ P\left(\tilde Y_{1:\eps N} \in A_\eta^c\right) = (1+o(1)) \sqrt{2\pi N} P\left( \sum_{i=1}^n \frac{\hat Y_i(\hat Y_i-1)}{N\ga_i} \ge \eta\eps^2 N\right),\]
 where $\{\hat Y_i\}_{i=1}^n$ are independent and $\hat Y_i \sim Poisson(\eps N\ga_i)$. Hence using standard large deviation argument,
 the above probability is at most
 $\exp(-C(\eta)\eps N)$ for some constant $C(\eta)$ such that $C(\eta)>0$ when $\eta$ is large enough.
 This completes the argument.
 \end{proof}

 \begin{rem}
 The assertions (3) and (4) of Lemma \ref{RBN3} are
 still true if $(\tI_1, \ldots, \tI_{\eps n})$ is replaced by
 $(\tI_{i_1}, \ldots, \tI_{i_{\eps n}})$ for some
 (possibly random) index set $\{i_1, i_2, \ldots, i_{\eps n}\}$.

 Similarly, the assertions (3) and (4) of Lemma \ref{RBN4} is true
 if we replace the index set $\{1, 2, \ldots, \eps N\}$ by (possibly random) $\{i_1,
 i_2, \ldots, i_{\eps N}\}$.
 \end{rem}

 \section{Ingredients} \label{ingredients}
  In this section, we will state and prove some of the basic lemmas
  which will be required in proving our main results.

 \begin{lem} \label{laplace transbd}
  Let $X$ be any nonnegative random variable such that $2 (E X)^2 \le E X^2 < \infty$. Then $\log Ee^{-tX} \le var(X) t^2/2 - E(X) t$ for any $t>0$.
 \end{lem}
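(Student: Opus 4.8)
The plan is to recast the bound in terms of the variance of $X$ under exponential tilting and to show that this variance never exceeds $\mathrm{Var}(X)$. Write $a=EX$, $b=EX^2$, $v=\mathrm{Var}(X)=b-a^2$; the hypothesis is $b\ge 2a^2$, i.e.\ $v\ge a^2$. If $a=0$ then $X=0$ a.s.\ and both sides vanish, so assume $a>0$. It is worth noting first that the crude estimate $Ee^{-tX}\le 1-at+\tfrac b2 t^2\le e^{-at+\frac b2 t^2}$ does \emph{not} suffice: the function $e^{-at+\frac v2 t^2}-(1-at+\tfrac b2 t^2)$ vanishes to second order at $t=0$ but has strictly negative third derivative there, so only $a$ and $b$ being available, the gain must come from the nonnegativity (the ``shape'') of $X$. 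For $t\ge 0$ let $\mu_t$ denote the law of $X$ tilted by $e^{-tX}$, i.e.\ $E_{\mu_t}f(X)=E[f(X)e^{-tX}]/E[e^{-tX}]$, so $\mu_0$ is the law of $X$; put $m_t=E_{\mu_t}X$, $A_t=E_{\mu_t}X^2$ and $V_t=A_t-m_t^2=\mathrm{Var}_{\mu_t}(X)$. Since $x^k e^{-tx}$ is bounded on $x\ge 0$ for each fixed $t>0$, one may differentiate under the expectation on $(0,\infty)$; with $\psi(t):=at+\log Ee^{-tX}$ the standard exponential-family identities give $\psi'(t)=a-m_t$, $\psi''(t)=V_t$, and $\psi'''(t)=-\kappa_3(\mu_t)$, where $\kappa_3(\mu_t)=E_{\mu_t}(X-m_t)^3$; moreover monotone convergence as $t\downarrow 0$ gives $m_t\to a$ (so $\psi'(t)\to 0$), $V_t\to v$, and $\psi(0)=0$. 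Consequently, \emph{once $V_t\le v$ is known for all $t\ge 0$}, Taylor's formula with integral remainder on $[\eps,t]$ followed by $\eps\downarrow 0$ yields $\psi(t)\le \tfrac v2 t^2$, i.e.\ $\log Ee^{-tX}\le \tfrac v2 t^2-at$, which is the claim.

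To prove $V_t\le v$ I would use two elementary observations. First, $x\mapsto x$ is nondecreasing and $x\mapsto e^{-tx}$ nonincreasing, so $\mathrm{Cov}(X,e^{-tX})\le 0$ and therefore $m_t\le a$ for every $t\ge 0$. Second, Cauchy--Schwarz under $\mu_t$ gives $A_t^2=\bigl(E_{\mu_t}[X^{1/2}X^{3/2}]\bigr)^2\le E_{\mu_t}X\cdot E_{\mu_t}X^3=m_t\,E_{\mu_t}X^3$ (all finite for $t>0$, and $m_t>0$), hence
\[
\kappa_3(\mu_t)=E_{\mu_t}X^3-3m_tA_t+2m_t^3\ \ge\ \frac{A_t^2}{m_t}-3m_tA_t+2m_t^3=\frac{(A_t-m_t^2)(A_t-2m_t^2)}{m_t},
\]
and since $A_t-m_t^2=V_t\ge 0$, this shows that whenever $A_t\ge 2m_t^2$ one has $\kappa_3(\mu_t)\ge 0$, i.e.\ $\psi'''(t)\le 0$. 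Now suppose $V_{t_1}>v$ for some $t_1>0$ and set $\tau=\sup\{t\in[0,t_1]:V_t\le v\}$; since $V$ is continuous on $[0,t_1]$ (continuity at $0$ coming from $V_t\to v$) and $V_0=v$, we get $\tau<t_1$, $V_\tau=v$, and $V_t>v$ on $(\tau,t_1]$. But on that interval $V_t>v\ge a^2\ge m_t^2$ by the two observations, hence $A_t=V_t+m_t^2>2m_t^2$, hence $\psi'''(t)\le 0$, i.e.\ $V$ is nonincreasing on $(\tau,t_1]$; therefore $V_{t_1}\le\lim_{t\downarrow\tau}V_t=v$, a contradiction. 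Thus $V_t\le v$ for all $t\ge 0$, and the conclusion follows as described.

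I expect the step $\mathrm{Var}_{\mu_t}(X)\le\mathrm{Var}(X)$ to be the only real obstacle. It genuinely fails without the hypothesis $b\ge 2a^2$ (already for a Bernoulli$(p)$ with $p>\tfrac12$), and $V_t$ need not be monotone in $t$, so there is no one-line skewness argument; what makes it go through is precisely that the moment condition is self-improving along the tilted family --- either $E_{\mu_t}X^2\ge 2(E_{\mu_t}X)^2$, which by Cauchy--Schwarz forces the third cumulant to be nonnegative and hence $V_t$ locally nonincreasing, or else $E_{\mu_t}X^2<2(E_{\mu_t}X)^2$, which forces $V_t<(E_{\mu_t}X)^2\le a^2\le v$ outright --- and the continuity argument above patches the two alternatives together. (If one prefers, the case $b=2a^2$ can instead be deduced from $b>2a^2$ by replacing $X$ with the mixture equal to $X$ w.p.\ $1-\varepsilon$ and $0$ w.p.\ $\varepsilon$ and letting $\varepsilon\downarrow0$, but the argument above does not need this.)
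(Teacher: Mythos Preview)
Your proof is correct, but it takes a genuinely different route from the paper's.

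The paper's argument is a two-step reduction: first it invokes Bennett's inequality \cite{B62} to replace $X$ by the two-point variable $Y=(1-p)\mvgd_0+p\mvgd_\ga$ (with $p=\mu^2/\mu_2^2$, $\ga=\mu_2^2/\mu$) matching the first two moments of $X$, obtaining $\log Ee^{-tX}\le \gf(t):=\log[(1-p)+pe^{-\ga t}]$; then it computes $\gf''$ explicitly for this two-point law and checks that $\gf''(t)\le\gs^2$ for $t\ge 0$, the hypothesis $2(EX)^2\le EX^2$ entering only here as $p\le 1/2$. Taylor expansion finishes.

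Your argument dispenses with the Bennett reduction and works directly with $X$: you show that the second derivative of $\psi(t)=at+\log Ee^{-tX}$, namely the variance $V_t$ of $X$ under the tilt $e^{-tX}$, never exceeds $v=\mathrm{Var}(X)$. The mechanism is the Cauchy--Schwarz bound $E_{\mu_t}X^3\ge A_t^2/m_t$ for nonnegative variables, which forces the third cumulant (hence $-V_t'$) to be nonnegative whenever $V_t\ge m_t^2$; combined with $m_t\le a$ and the hypothesis $v\ge a^2$, this yields a clean continuity trap that rules out $V_t>v$.

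What each buys: the paper's proof is shorter and modular---Bennett's inequality is a black box, and the remaining calculus is elementary---but it imports an external result. Your proof is fully self-contained and, as a byproduct, establishes the pleasant fact that exponential down-tilting never increases the variance of a nonnegative variable once $EX^2\ge 2(EX)^2$; on the other hand it is noticeably longer, and the contradiction/continuity step, while sound, is more delicate than the explicit two-point computation. Both arguments are ultimately proving the same inequality $\psi''(t)\le v$ and then integrating twice.
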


 \begin{proof}
 Let $\mu= E X$ and $\mu_2= \sqrt{E X^2}$ so that $\gs^2=var(X)=\mu_2^2-\mu^2$. We choose $p=\mu^2/\mu_2^2$ and $\ga=\mu_2^2/\mu$ so that $Y:=(1-p)\mvgd_0+p\mvgd_\ga$ satisfies $EY=\mu$ and $E Y^2=\mu_2^2$. By Benette's inequality \cite{B62},
 \beq\label{Ben ineq} \text{ for any $t>0$, } \log Ee^{-t X} \le \log Ee^{-tY} = \log[(1-p) + p e^{-\ga t}] =: \gf(t). \eeq
  Differentiating  the function $\gf$ and noting that $p\ga=\mu$ and $\mu\ga=\mu_2^2$ we get
  \[ \gf'(t) = \frac{-p\ga e^{-\ga t}}{(1-p) + pe^{-\ga t}} = \frac{-\mu}{(1-p)e^{\ga t} + p}, \quad \gf''(t)=\gs^2 \frac{e^{\ga t}}{[(1-p) e^{\ga t} + p]^2}. \]
 Also note that the quadratic function $f(x)=[(1-p) x+p]^2-x$ has nonnegative slope at $x=1$ if $2(1-p)\ge 1$, which is true by our hypothesis. So $\gf''(t) \le \gs^2$ for any $t\ge 0$. Finally using Taylor series expansion for the function $\gf$ we see that for any $t>0$,
 \beqax
 \gf(t) & = & \gf(0) +\gf'(0) t+ \gf''(u) t^2/2 \text{ for some $u\in[0,t]$} \\
 &\le & -\mu t +\gs^2 t^2/2.
 \eeqax
 This inequality together with \eqref{Ben ineq} gives the desired result.
 \end{proof}

 \begin{lem}\label{phibd}
 For any $\kappa>0$ and $\gD\ge 1$ the function
 $\phi_{\kappa,\gD}(\gamma):=\gamma[\log(\gD/\gamma)]^\kappa$ is increasing for $\gamma \le \gD e^{-\kappa}$ and decreasing for $\gamma\ge \gD e^{-\kappa}$.
 Hence $\phi_{\kappa,\gD}(\gamma) \le \gD(\kappa/e)^\kappa$.
 \end{lem}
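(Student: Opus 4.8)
The plan is a one-variable calculus argument on the natural domain $\gamma \in (0,\gD)$, where $\log(\gD/\gamma)>0$ and hence $[\log(\gD/\gamma)]^\kappa$ is well defined. The function $\phi_{\kappa,\gD}$ extends continuously to $\gamma=\gD$ with value $0$ (since $\log 1 =0$ and $\kappa>0$) and to $\gamma\to 0^{+}$ with value $0$ (since $\gamma$ decays faster than $[\log(\gD/\gamma)]^\kappa$ grows), so it suffices to locate its unique interior critical point and evaluate there; this will also yield the stated bound on the whole relevant range $\gamma\in(0,\gD]$.

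First I would compute the derivative. Writing $u := \log(\gD/\gamma)$, so that $\frac{d}{d\gamma}u = -1/\gamma$ and $\phi_{\kappa,\gD}(\gamma)=\gamma u^{\kappa}$, the product rule gives
\[ \phi_{\kappa,\gD}'(\gamma) \;=\; u^{\kappa} + \gamma\cdot \kappa u^{\kappa-1}\cdot\Bigl(-\tfrac{1}{\gamma}\Bigr) \;=\; u^{\kappa-1}\,(u-\kappa). \]
For $\gamma\in(0,\gD)$ we have $u>0$, hence $u^{\kappa-1}>0$ whether or not $\kappa<1$, so the sign of $\phi_{\kappa,\gD}'(\gamma)$ agrees with the sign of $u-\kappa = \log(\gD/\gamma)-\kappa$. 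The latter is strictly decreasing in $\gamma$ and vanishes exactly at $\gamma=\gD e^{-\kappa}$, so $\phi_{\kappa,\gD}'>0$ on $(0,\gD e^{-\kappa})$ and $\phi_{\kappa,\gD}'<0$ on $(\gD e^{-\kappa},\gD)$, which is precisely the claimed monotonicity.

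Finally I would evaluate at the unique maximizer: at $\gamma=\gD e^{-\kappa}$ one has $\log(\gD/\gamma)=\kappa$, so
\[ \phi_{\kappa,\gD}(\gD e^{-\kappa}) \;=\; \gD e^{-\kappa}\,\kappa^{\kappa} \;=\; \gD\,(\kappa/e)^{\kappa}, \]
which gives $\phi_{\kappa,\gD}(\gamma)\le \gD(\kappa/e)^{\kappa}$ for all $\gamma$ in the domain. There is no substantive obstacle here; the only points needing (minor) care are restricting to the range on which $\log(\gD/\gamma)$ is positive, observing that $u^{\kappa-1}>0$ there even when $\kappa<1$, and checking the boundary behaviour so that the supremum is genuinely attained at the interior critical point rather than escaping to an endpoint.
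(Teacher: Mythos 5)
Your proof is correct: the derivative computation $\phi_{\kappa,\gD}'(\gamma)=[\log(\gD/\gamma)]^{\kappa-1}(\log(\gD/\gamma)-\kappa)$ gives exactly the stated monotonicity, and evaluating at $\gamma=\gD e^{-\kappa}$ yields the bound. The paper gives no details at all (it just says the conclusion follows by an "elementary method"), and your one-variable calculus argument is precisely the intended elementary proof.
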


 \begin{proof}
 We get the conclusion using elementary method.
 \end{proof}

 \begin{lem} \label{Binldp}
 For $\gd>0, \gth(m):=\gb_1m[\log(n/m)]^{\gb_2}, 0 < \gc \le 1$ and any integer $\gL \ge 1$ there is an $\ep_{\sref{Binldp}} >0$ depending on $\gL, \gb_1, \gb_2, \gc$ such that $m\le \ep_{\sref{Binldp}} n$ and $M\in\dN$ imply
 \[ P(Binomial(\gL M, (\gth(m)/n)^\gc) > \frac{1}{\gc} (1+\gd) M) \le \exp(-(1+\gd/2)M\log(n/m)). \]
 \end{lem}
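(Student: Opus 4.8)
The plan is to control the binomial upper tail with the crudest union bound, over the size-$j$ subsets of trials that consist entirely of successes; this is essentially sharp here because, once $m/n$ is small, the threshold $\tfrac1\gc(1+\gd)M$ sits far above the mean $\gL M(\gth(m)/n)^\gc$. Throughout I would set $x:=\log(n/m)$, $p_0:=(\gth(m)/n)^\gc$, $N:=\gL M$, and $j:=\lfloor\tfrac1\gc(1+\gd)M\rfloor+1$, so that the event in question is $\{X\ge j\}$ for $X\sim\mathrm{Binomial}(N,p_0)$, and $j>\tfrac{1+\gd}{\gc}M\ge M\ge1$ (using $\gc\le1$, $\gd>0$).

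First I would note $P(X\ge j)\le\binom{N}{j}p_0^j$, since $\{X\ge j\}$ forces some $j$-subset of the trials to be all successes; then, using $j!\ge(j/e)^j$ together with $j\ge\tfrac{1+\gd}{\gc}M$ and $N=\gL M$,
\[ P(X\ge j)\le\Big(\frac{eN}{j}\Big)^{j}p_0^j=\Big(\frac{eNp_0}{j}\Big)^{j}\le\Big(\frac{e\gc\gL p_0}{1+\gd}\Big)^{j}. \]
Because $\gth(m)/n=\gb_1(m/n)x^{\gb_2}$ and $t(\log(1/t))^{\gb_2}\to0$ as $t\downarrow0$, choosing $\ep_{\sref{Binldp}}$ small forces $e\gc\gL p_0/(1+\gd)<1$, so the exponent may be inflated to $\tfrac{1+\gd}{\gc}M$, giving
\[ P(X\ge j)\le\exp\!\Big(\tfrac{1+\gd}{\gc}M\big[\log\tfrac{e\gc\gL}{1+\gd}+\log p_0\big]\Big). \]

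Finally I would unwind $\log p_0=\gc(\log\gb_1-x+\gb_2\log x)$: the exponent above equals $M\big[C_0-(1+\gd)x+(1+\gd)\gb_2\log x\big]$, where $C_0:=\tfrac{1+\gd}{\gc}\log\tfrac{e\gc\gL}{1+\gd}+(1+\gd)\log\gb_1$ depends only on $\gd,\gL,\gb_1,\gc$. Thus the claimed bound $\exp(-(1+\gd/2)Mx)$ follows once $C_0+(1+\gd)\gb_2\log x\le\tfrac{\gd}{2}x$, and since a linear term eventually dominates a logarithmic one there is an $x_*$ --- depending only on $\gd,\gL,\gb_1,\gb_2,\gc$ --- for which this, as well as $e\gc\gL p_0/(1+\gd)<1$, holds whenever $x\ge x_*$; one then takes $\ep_{\sref{Binldp}}:=e^{-x_*}$. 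I do not expect any genuine obstacle: the only care needed is in the final elementary inequality and in checking that $\ep_{\sref{Binldp}}$ depends only on the listed parameters. Conceptually the lemma just records that we are deep enough in the binomial tail for the first-moment subset bound to beat the target rate, with the slack $\tfrac{\gd}{2}\log(n/m)$ to spare.
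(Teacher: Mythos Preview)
Your proof is correct. The paper's argument is structurally the same but invokes the full relative-entropy Chernoff bound $P(\mathrm{Binomial}(N,q)\ge Nr)\le\exp(-NH_q(r))$ with $H_q(r)=r\log(r/q)+(1-r)\log\frac{1-r}{1-q}$, then treats the two terms separately: the $r\log(r/q)$ term produces exactly your main contribution, while the $(1-r)\log\frac{1-r}{1-q}$ term is bounded crudely via $(1-r)\log(1-r)\ge-1/e$ and absorbed into the constant. Your union-bound estimate $P(X\ge j)\le\binom{N}{j}p_0^j\le(eNp_0/j)^j$ is more elementary and sidesteps that second term altogether; both routes then land on the same final inequality $C_0+(1+\gd)\gb_2\log x\le(\gd/2)x$ for $x=\log(n/m)$ large, and both fix $\ep_{\sref{Binldp}}$ so as to enforce it. One wording nit: when you say the exponent ``may be inflated to $\tfrac{1+\gd}{\gc}M$'' you are in fact \emph{lowering} the exponent from $j>\tfrac{1+\gd}{\gc}M$; the inequality survives because the base is below $1$, which is what your preceding clause secures.
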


 \begin{proof}
 A standard large deviations result for the Binomial distribution,
 see e.g., Lemma 2.8.4 in \cite{D07} implies $P( Binomial(\gL M,q) \ge \gL M r) \le
 \exp(-\gL M H_q(r))$ for any $r > q$, where
 \beq
 H_q(r) := r \log\left( \frac{r}{q} \right) + (1-r) \log \left( \frac{1-r}{1-q} \right). \label{Bldpb}
 \eeq
 When $r=(1+\gd)/(\gc\gL)$, the first term in the large deviation bound
 \eqref{Bldpb} is
 \begin{align*}
 \exp(-\gL M r\log(r/q)) & \le \exp\left( - \frac{1}{\gc}(1+\gd)M\left[
 \log\left(\frac{n}{m}\right)^\gc -\log\frac{\gL\gc\gb_1^\gc}{1+\gd}  -\gb_2\gc\log\log\frac{n}{m}\right] \right)
 \end{align*}
 For the second term in the large deviation
 bound in \eqref{Bldpb} we note that $1/(1-q) > 1$ and
 $(1-r)\log(1-r) \ge -1/e$ by Lemma \ref{phibd} (with $\kappa=\gD=1$),
 and conclude
 \[
 \exp\left(-\gL M(1-r)\log \left( \frac{1-r}{1-q} \right) \right) \le
 \exp\left(-\gL M(1-r)\log(1-r) \right) \le \exp(\gL M/e).
 \]
 Combining the last two estimates
 \begin{align*}
 & P(Binomial(\gL M, (\gth(m)/n)^\gc) > \frac{1}{\gc} (1+\gd) M) \\
 & \le
 \exp\left( - (1+\gd)M \log(n/m) +\gb_4 M +\gb_5 M \log\log\frac{n}{m}\right), \end{align*}
 for constants $\gb_4$ and $\gb_5$.

 Now we choose
 \[\ep_{\sref{Binldp}}:= \max\left\{\ep \in (0, e^{-2\gb_5/\gd}): \ep\left[\log\frac{1}{\ep}\right]^{2\gb_5/\gd} \le \exp(-2\gb_4/\gd)\right\}.\]
 Clearly $\ep_{\sref{Binldp}}>0$ and, in view of Lemma \ref{phibd} with $\kappa=2\gb_5/\gd$ and $\gD=1$,  $m\le \ep_{\sref{Binldp}} n$ implies
 \[ (m/n)\left[\log\frac{n}{m}\right]^{2\gb_5/\gd} \le \ep_{\sref{Binldp}}\left[\log\frac{1}{\ep_{\sref{Binldp}}}\right]^{2\gb_5/\gd} \le \exp(-2\gb_4/\gd),\]
 which in turn implies $\gb_4+\gb_5\log\log[n/m] \le (\gd/2)\log(n/m)$. This completes the proof.
 \end{proof}

 \section{Choice of good graph} \label{good graph}
 For $B \subset A\subset [n]$, recall the definition of the forest $\{Z^A_t\}_{t=0}^{K_{|A|}}$ (as described in Section
 \ref{loc nei}) with associated subsets $\{O^{A,B}_t\}$ of `open'
 sites.   Let $\vp$ be the limiting out-degree distribution for
 $\ola\sG_n$, namely
 \beq \label{pdef} \vp = \begin{cases}
           \I\vp & \text{ for } \RBN^2 \\
           \left\{(\I r)^{-1} \sum_{l} l \IO p_{k,l}\right\}_{k=2}^\infty & \text{ for } \RBN^4
           \end{cases}\eeq
 with $p_0=p_1=0$ and mean $r=\sum_k kp_k>2$.

 \begin{prop}\label{cG1}
 There are constants $c_1, c_2, \gd > 0$ such that for $q \in(1/r,
 1/2), K_m=c_1\log_2(c_2\log(n/m))$ and  for $A \subset [n]$ if
 \[ E_A := \cap_{B \in \{B\subset A: |B| \ge (1-\gd)|A|\}} \left\{|O^{A,B}_{K_{|A|}}| \ge (4/\gd)
 q^{-K_{|A|}}|B|\right\},\]
 then there is an $\ep_{\sref{cG1}}>0$ such that for any $a>0$ the probability of
 \[ \cG^1_n :=\cap_{A\in\{A\subset [n]: (\log n)^a \le |A| \le \ep_{\sref{cG1}} n\}} E_A\]
 under $\pr_{i,n}, i=2, 3, 4,$ is $1-o(1)$.
 \end{prop}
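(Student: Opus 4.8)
The plan is to bound below the ``open'' population $O^{A,B}_t$ of the forest coupling of Section~\ref{loc nei} by a supercritical branching recursion whose mean offspring exceeds $q^{-1}$, treating collisions and the (harmless) truncations to $2r\,|O^{\cdot}_{t-1}|$ as lower-order losses controlled by the large-deviation lemmas of Sections~\ref{prelim}--\ref{ingredients}. Write $m:=|A|$, $M_t:=|O^{A,B}_t|$, $K:=K_m$. Restricting to a good degree sequence --- $\sI_n$ for $\RBN^2$ and $\sA$ for $\RBN^4$ (hence $\RBN^3$), each of probability $1-o(1)$ --- Lemma~\ref{RBN2}(2), resp.\ Lemma~\ref{RBN4}(2)--(4), lets me replace the input-label law by the i.i.d.\ law --- $\mathrm{Unif}([n])$ for $\RBN^2$, the out-degree--biased law for $\RBN^4$ --- at the price of a single fixed multiplicative constant. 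Under it, a site born at level $t\ge1$ carries a fresh label, so its number of children is an independent copy of the limiting out-degree law $\vp$ of \eqref{pdef} (mean $r>q^{-1}>2$, finite second moment, support in $\{2,3,\dots\}$), and a site of $Z^A_t\setminus R^A_t$ meets a collision with probability $\le p_0$, where $p_0:=2\gth(m)/n\le 2(m/n)[\log(n/m)]^{O(1)}$ for $\RBN^2$ (Lemmas~\ref{RBN2}(1) and \ref{phibd}) and $p_0:=2\Gamma(2\eps,\OU\vp)$ for $\RBN^4$ (Lemma~\ref{RBN4}(1)); $p_0$ is as small as we wish once $\ep_{\sref{cG1}}$ is small.

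Fixing $q^{-1}<r'<r''<r$, one has, conditionally on the forest up to level $t$,
\[
M_{t+1}\ \ge\ \min\!\bigl(S_t,\,2rM_t\bigr)-\mathrm{Bin}\!\bigl(2rM_t,\,p_0\bigr),\qquad S_t:=\!\!\sum_{v\in O^{A,B}_t}\!\!\bigl(\text{number of children of }v\bigr);
\]
the minimum never costs a lower bound since $S_t\ge 2M_t$ always, while for $t\ge1$, $S_t$ is a sum of $M_t$ i.i.d.\ $\vp$'s, so by Lemma~\ref{laplace transbd} $\pr(S_t<r''M_t)\le C\exp(-c\,M_t)$ with $c=c(r'',\vp)>0$, and $S_0\ge 2|B|$ deterministically. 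Using Lemma~\ref{Binldp} for the collision loss one arranges $M_t$ non-decreasing, $M_1$ a fixed fraction of $|B|$, and $M_{t+1}\ge r'M_t$ on any level with $S_t\ge r''M_t$; calling the other levels \emph{slow} and choosing $\gamma>0$ with $(1-\gamma)\log r'>\log(1/q)$, on the event ``at most $\gamma K$ slow levels'' one gets
\[
M_K\ \ge\ c_0\,(r')^{(1-\gamma)K}|B|\ \ge\ \frac4\gd\,q^{-K}|B|,
\]
the last step because $[(r')^{1-\gamma}q]^{K}\to\infty$; this dictates how large $c_1,c_2$ must be chosen so that $K=c_1\log_2(c_2\log(n/m))$ works for every $m\le\ep_{\sref{cG1}}n$ (down to $m=\ep_{\sref{cG1}}n$, where $K$ is the constant $c_1\log_2(c_2\log(1/\ep_{\sref{cG1}}))$).

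The main obstacle is uniformity: $\cG^1_n$ fails iff $E_A$ fails for some $A$ with $(\log n)^a\le m\le\ep_{\sref{cG1}}n$, and $E_A$ through one of its $\le 2^{H(\gd)m}$ admissible subsets $B$, so each relevant failure probability must beat $\binom nm 2^{H(\gd)m}\le\exp\!\bigl((1+o(1))m\log(n/m)\bigr)$ --- even for $m$ as small as $(\log n)^a$. Rates of order $m$ are then useless, so I would phrase the controlling events at the level of the forest of $A$ (avoiding the $B$-union) and transfer them to every $B$ via $|B|\ge(1-\gd)|A|$. For collisions: the total collision count in the forest of $A$ is stochastically $\le\mathrm{Bin}\!\bigl(\gth(m),\,2\gth(m)/n\bigr)$, so a Chernoff estimate of the type in Lemma~\ref{Binldp} shows it exceeds $(1+\gd)m$ with probability $\le\exp\!\bigl(-(1+\gd/2)m\log(n/m)(1-o(1))\bigr)$, which beats $\binom nm$; on its complement the weighted collision loss in $B$'s subtree is $\le(1+\gd)m$, hence $M^{A,B}_t\ge 2^{t-1}\bigl(2(1-\gd)-(1+\gd)\bigr)m\ge c'm\,2^{t}$ for every $B$. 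For the offspring sums, whose Cramér rate is only $\Theta(m)$: the forest of $A$ then carries at least of order $m\,2^{K}$ nodes, and making more than $\gamma K$ of its generations slow forces $\Omega\!\bigl(m\,2^{\gamma K}\bigr)$ disjoint atypically-small offspring counts, an event of probability $\le\exp\!\bigl(-\Omega(m\,2^{\gamma K})\bigr)=\exp\!\bigl(-\Omega\bigl(m\,(c_2\log(n/m))^{\gamma c_1}\bigr)\bigr)$; this dominates $m\log(n/m)$ as soon as $\gamma c_1>1$ and $c_2$ is large --- where the logarithmic shape of $K_m$ is used --- and it again transfers to every $B$. Summing over $B$, then $A$, then $m$ yields $\pr_{i,n}(\cG^1_n)=1-o(1)$ for $i=2,3,4$. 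The $\RBN^4$ (hence $\RBN^3$) case is identical, now using the collision bound $2\Gamma(2\eps,\OU\vp)$ and Lemma~\ref{RBN4}(2)--(4) to pass to the out-degree--biased i.i.d.\ labels, with $\vp=\tilde\vp$.
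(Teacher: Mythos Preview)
Your strategy is essentially the paper's: pass to the i.i.d.\ labeling via Lemmas~\ref{RBN2}/\ref{RBN4}, use the guaranteed minimum degree $\ge 2$ together with collision control to force $|O^{A,B}_t|\ge c'm\,2^t$, and only \emph{then} invoke the Cram\'er bound for the offspring sums, once the population is large enough that its rate beats the $\binom nm$ cost. The paper organizes this as a strict two-phase scheme --- levels $1,\dots,\rho k_m$ use only the events $H_k$ (per-block collision control) and the min-degree-$2$ bound to reach $|O^{A,B}_{\rho k_m}|\ge 2^{k_m}|B|$; levels $\rho k_m+1,\dots,K_m$ add the offspring events $L_k$, whose rate $I_n(\eta)\cdot|O^{A,B}_{\rho(k-1)}|$ is now $\ge(1+3\gd)|B|\log(n/m)$ by the choice of $k_m$ --- while you phrase it as ``at most $\gamma K$ slow generations.'' Both packagings work and yield the same constants $c_1,c_2$.

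One point to correct: you say you will ``avoid the $B$-union'' by phrasing the offspring control at the level of $A$'s forest and then transferring to every $B$. This transfer works for the collision event (total collisions in $A$'s forest bounds those in any $B$-subtree), but it does \emph{not} work for the offspring control: the event ``generation $t$ is slow in $B$'s subtree'' (i.e., $\sum_{v\in O^{A,B}_t}\deg(v)<r''|O^{A,B}_t|$) is genuinely $B$-dependent, and $A$'s forest can have every generation fast while some $B$-subtree has many slow ones. The fix is simply to do the $B$-union, exactly as the paper does in \eqref{cGbd1}: there are at most $\binom m{\gd m}\le(e/\gd)^{\gd m}=\exp(O(m))$ admissible $B$'s, and since your offspring rate $\Omega(m\,2^{\gamma K})=\Omega\bigl(m(\log(n/m))^{\gamma c_1}\bigr)$ with $\gamma c_1>1$ already dominates $m\log(n/m)$, the extra $\exp(O(m))$ is absorbed. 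A smaller point: for $\RBN^4$ the per-site collision probability is not a fixed small constant but $c\,(\gth(m)/n)^\gamma$ with $\gamma=(\alpha-2)/(\alpha-1)<1$; your global collision bound then needs threshold $(1+\gd)m/\gamma$ (not $(1+\gd)m$) to recover rate $(1+\gd/2)m\log(n/m)$, and the growth recursion requires $\gamma>1/2$ --- this is why the paper introduces the parameters $\rho,\gc$ and the blocked events $H_k$.
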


 \begin{proof}
 Let $\vp_n=\{p_{n,k}\}$ be the distribution
 \beq \label{p_ndef} \vp^n =\begin{cases}
        \frac 1n \sum_{z\in[n]} \mvgd_{I_z} & \text{ for $\RBN^2$} \\
        \sum_{z\in[n]} O_z \mvgd_{I_z}/\sum_{z\in[n]} O_z  & \text{ for $\RBN^4$}
        \end{cases}.
 \eeq
 In view of Lemma \ref{RBN2}, \ref{RBN3} and \ref{RBN4}, $\vp_n$ approximates
 out-degree distribution for the graph $\ola \sG_n$ with $p_{n,0}=p_{n,1}=0$. Let $r_n:=\sum_k
 kp_{n,k} \in (2,\infty)$ be the mean of $\vp_n$. It is easy to see that
 $r_n \to r$.

 In this proof, we write $\pr$ for the probability distribution on
 the forests $\{Z^A_t\}_{t\ge 0, A\subset [n]}$, when $\vp_n$ is used
 as its offspring distribution. We also use $\tilde\pr$ as a dummy
 replacement for $\pr_{2,\vI}, \pr_{3,\vO}$ and $\pr_{4,\vI,\vO}(\cdot| F_n)$.
 Lemma \ref{RBN2}, \ref{RBN3} and \ref{RBN4} suggest that for any
 event $F$ involving the structure of the graph which depends on at
 most $\eps n$ many vertices of the graph,
 \beq \label{compare}
 \left.
 \begin{array}{c}
   \I\vp_{\otimes n}\left(\left\{\vI: \tilde\pr(F) \le C_{\sref{RBN2}} \pr(F)\right\}\right) \\
   \\
   \OU\vp_{\otimes n}\left(\left\{\vO: \tilde\pr(F) \le C_{\sref{RBN3}}(1-\eps)^{-1}(1+o(1)) \pr(F)\right\}\right) \\
   \\
   \IO\vp_{\otimes n}\left(\left.\left\{(\vI,\vO): \tilde\pr(F) \le C_{\sref{RBN4}} \exp(-\OU r\eps\log(1-\eps)n) \pr(F)\right\}\right| E_n\right)
 \end{array}
 \right\} = 1-o(1).\eeq

 Now fix $\eta \in (0, 1-1/qr)$, and
 \[ \gc := \begin{cases}
 1 & \text{ for $\RBN^2$}\\
 \frac{\ga-2}{\ga-1} & \text{ for $\RBN^3$ and $\RBN^4$ when $\OU p_k \sim ck^{-\ga}$ and $\ga>3$}
 \end{cases}.
 \]
 Clearly $1/\gc<2<r(1-\eta)$, so we can choose $\gd \in (0, 1/10)$ such that $(1+5\gd)/(2\gc) < 1$.
 We need to introduce some more notations, let
 \begin{align}
 \tr & := r_n(1-\eta) \text{ so that  $q\tr>1$ for large enough $n$}, \notag\\
 \rho & > 0 \text{ be such that }
 2^{\rho-1}\left(1-\frac{1+5\gd}{2\gc}\right) \ge 1 \text{ and }
 (q\tr)^\rho\left(1-\frac{1+5\gd}{\gc \tr}\right) > 1,\notag\\
 \gs &\ge 1 \text{ be such that }
 \left[(q\tr)^\rho\left(1-\frac{1+5\gd}{\gc \tr}\right)\right]^\gs \ge
 r_n^\rho,\notag\\
 I_n(\eta) &:= \sup_\theta\left(\theta r(1--\eta)-\log\left(\sum_k e^{\theta k}
 p_{n,k}\right)\right) > 0 \text{ be the large deviation rate function for $\vp$ } \notag\\
 k_m &:= \log_2\left[\frac{1+3\gd}{(\tr-\gc^{-1}(1+5\gd)) I_n(\eta)} \log\frac{n}{m}\right] \text{ and $K_m=\rho\gs k_m$ for $m \le
 n$, so that } \label{K_mdef} \\
 \gth(m) & := m+\sum_{l=1}^{K_m} (2r)^l \le
 \gb_1m[\log(n/m)]^{\gb_2} \notag \\
 \text{ for } \gb_1 & =\left(1+\frac{2r}{2r-1}\right)
 \left(\frac{1+3\gd}{(\tr-\gc^{-1}(1+5\gd)) I_n(\eta)}\right)^{\rho\gs\log_2(2r)} \text{ and
 } \gb_2 := \rho\gs\log_2(2r). \notag
 \end{align}

 Suppose $B\subset A\subset [n]$ are subsets such that $|A|=m$ and $|B|\ge (1-\gd)m$. For $k\ge 1$, define the events
 \beqax
 H^{A,B}_k &:=&  \left\{\sum_{i=1}^{\rho} |C^{A,B}_{\rho(k-1)+i}| \le \frac{1}{\gc}(1+5\gd)|O^{A,B}_{\rho(k-1)}|\right\},\\
 L^{A,B}_{k,j} &:=& \left\{|Z^{A,B}_{\rho(k-1)+j}| \ge \tr |O^{A,B}_{\rho(k-1)+j-1}|\right\} \text{ and } L^{A,B}_k :=  \cap_{j=1}^{\rho} L^{A,B}_{k,j}. \eeqax
 Note that on the event $H^{A,B}_k$,
 \beqa
 |O^{A,B}_{\rho k}| &\ge&  2|O^{A,B}_{\rho k-1}| - |C^{A,B}_{\rho k}|  \notag\\
 &\ge&  2^2|O^{A,B}_{\rho k-2}| - 2|C^{A,B}_{\rho k-1}| -|C^{A,B}_{\rho k}|  \notag\\
 &\ge& \cdots  \notag\\
 &\ge&  2^\rho|O^{A,B}_{\rho(k-1)}| - \sum_{i=1}^\rho 2^{\rho-i} |C^{A,B}_{\rho(k-1)+i}|  \notag\\
 &\ge&  2^\rho|O^{A,B}_{\rho(k-1)}| - 2^{\rho-1}\sum_{i=1}^\rho |C^{A,B}_{\rho(k-1)+i}|  \notag\\
 &\ge& (2^\rho-2^{\rho-1}\gc^{-1} (1+5\gd)) |O^{A,B}_{\rho(k-1)}| \ge
 2|O^{A,B}_{\rho(k-1)}|, \label{Oineq1}
 \eeqa
 by the choice of $\rho$.
 Since $|O^{A,B}_t| \le (2r) |O^{A,B}_{t-1}|$ for any $t\ge 1$,
 a similar argument which leads to the previous display suggests
 that the following inequalities are true on the event $H^{A,B}_k\cap \cap_{j=1}^i
 L^{A,B}_{k,j}$.
 \beqa
 |O^{A,B}_{\rho(k-1)+i}|
 & \ge & \tr |O^{A,B}_{\rho(k-1)+i-1}| - |C^{A,B}_{\rho(k-1)+i}|  \notag\\
 & \ge & \tr^2 |O^{A,B}_{\rho(k-1)+i-2}| -\tr |C^{A,B}_{\rho(k-1)+i-1}| - |C^{A,B}_{\rho(k-1)+i}| \ge \cdots  \notag\\
 &\ge&  \tr^i |O^{A,B}_{\rho(k-1)}| - \sum_{j=1}^i \tr^{i-j} |C^{A,B}_{\rho(k-1)+j}|  \notag\\
 &\ge&  \tr^i |O^{A,B}_{\rho(k-1)}| - \tr^{i-1} \sum_{j=1}^i |C^{A,B}_{\rho(k-1)+j}| \notag \\
 &\ge& (\tr^i-\tr^{i-1} \gc^{-1} (1+5\gd)) |O^{A,B}_{\rho(k-1)}| \label{Oineq2} \\
 & \ge &
 \tr\left(1-\frac{1+5\gd}{\gc \tr}\right) |O^{A,B}_{\rho(k-1)}|, \label{Oineq3}
 \eeqa
 Taking $i=\rho$ in \eqref{Oineq2},
 \beq\label{Oineq4}
 |O^{A,B}_{\rho k}| \ge  \tr^\rho\left(1-\frac{1+5\gd}{\gc\tr}\right) |O^{A,B}_{\rho(k-1)}|
 \text{ on the event } H^{A,B}_k \cap L^{A,B}_k.\eeq
 Recalling $K_m=\rho\gs k_m$ and using \eqref{Oineq1} and \eqref{Oineq4} repeatedly,
 \[ |O^{A,B}_{K_m}| \ge |B|2^{k_m}\left[\tr^\rho\left(1-\frac{1+5\gd}{\gc\tr}\right)\right]^{(\gs-1)k_m} \text{ on the event
 $\cap_{k=1}^{\gs k_m} H^{A,B}_k \cap \cap_{k=k_m+1}^{\gs k_m} L^{A,B}_k$.}\]
 Now note that
 \[ q^{\rho\gs} \left[\tr^\rho\left(1-\frac{1+5\gd}{\gc\tr}\right)\right]^{\gs-1}
 \ge \left[(q\tr)^\rho\left(1-\frac{1+5\gd}{\gc\tr}\right)\right]^\gs r^{-\rho} \ge 1\]
 by the choices of $\rho$ and $\gs$. So
 if
 \beq \label{mnbd1}
 (m/n) \le \exp\left(-\frac{4(\tr-\gc^{-1}(1+5\gd)) I_n(\eta)}{\gd(1+3\gd)}\right) \text{ so that $2^{k_m}\ge (4/\gd)$},\eeq
 then
 \beq\label{Oineq5}
 |O^{A,B}_{K_m}| \ge |B|(4/\gd)q^{-K_m}
 \text{on the event } \cap_{k=1}^{\gs k_m} H^{A,B}_k \cap \cap_{k=k_m+1}^{\gs k_m}
 L^{A,B}_k.\eeq
 To estimate the probability of the event in \eqref{Oineq5} recall
 that $|C^{A,B}_{t+1}|, |O^{A,B}_{t+1}| \le (2r) |O^{A,B}_t|$ for any $t\ge 0$ and by Lemma \ref{collision} each site is included
 in $C^{A,B}_t \subset C^A_t$ with probability at most $c(\gth(m)/n)^\gc$. So
 for any $k\ge 1$, $\sum_{i=1}^\rho |C^{A,B}_{\rho(k-1)+i}|$ conditionally on $|O^{A,B}_{\rho(k-1)}|$
 is stochastically dominated by the $Binomial(\gL M, c(\gth(m)/n)^\gc)$ distribution, where
 $\gL=(2r) + (2r)^2 + \cdots + (2r)^\rho$ and $M=|O^{A,B}_{\rho(k-1)}|$. Hence, applying Lemma
 \ref{Binldp} with the above choices of $\gL$ and $M$ if
 \beq \label{mnbd2} m \le \ep_{\sref{Binldp}}(\gL,5\gd,\eta,\gc) n,\eeq
 then
 \[
 \pr\left(\left.(H^{A,B}_k)^c\right| |O^{A,B}_{\rho(k-1)}|\right)
 \le \exp\left(-(1+5\gd/2)|O^{A,B}_{\rho(k-1)}| \log(n/m)\right).\]
 Since $|O^{A,B}_{\rho(k-1)}| \ge |B|$ on the event $\cap_{j=1}^{k-1} H^{A,B}_j$ by
 \eqref{Oineq1},  the above inequality reduces to
 \beq
 \pr\left((H^{A,B}_k)^c \cap\cap_{j=1}^{k-1} H^{A,B}_j\right)
 \le \exp\left(-(1+5\gd/2)|B|\log(n/m)\right) \le \exp\left(-(1+\gd)m\log(n/m)\right). \label{err1}
 \eeq
 The last inequality follows from the fact that $|B|\ge(1-\gd)m$ and
 $\gd\in (0,1/10)$, which makes $(1+5\gd/2)(1-\gd)\ge1+\gd$.

 By the choice of $I_n(\eta)$, a standard large deviation argument for
 the sum of \iid random variables yields
 \[\pr\left(\left.(L^{A,B}_{k,i})^c\right| |O^{A,B}_{\rho(k-1)+i-1}|\right)
 \le \exp\left(-|O^{A,B}_{\rho(k-1)+i-1}| I_n(\eta)\right)\]
 for any $k\ge 1$ and $1\le i\le \rho$.
 Now repeated applications of the inequality in
 \eqref{Oineq1} suggest that $|O^{A,B}_{\rho k_m}| \ge
 2^{k_m}|B|$ on the event $\cap_{j=1}^{k_m} H^{A,B}_j$. In view of
 \eqref{Oineq1} and \eqref{Oineq3}, for any
 $k>k_m$ and $1\le i\le \rho$,
 \[ |O^{A,B}_{\rho(k-1)+i-1}| \ge (\tr-\gc^{-1}(1+5\gd))|O^{A,B}_{\rho(k-1)}|
 \ge (\tr-\gc^{-1}(1+5\gd)) |O^{A,B}_{\rho k_m}|\]
 on the event $\cap_{j=k_m+1}^k H^{A,B}_j \cap_{j=1}^{i-1} L^{A,B}_{k,j}$.
 So the inequality in the last display reduces to
 \beqa
 \pr\left((L^{A,B}_{k,i})^c\cap_{j=1}^{i-1} L^{A,B}_{k,j}
 \cap_{j=1}^{k} H^{A,B}_j\right)
 & \le & \exp\left(-(\tr-\gc^{-1}(1+5\gd))2^{k_m}|B| I_n(\eta)\right) \notag\\
 & \le & \exp(-(1+3\gd)|B|\log(n/m)) \notag \\
 & \le & \exp(-(1+\gd)m\log(n/m)). \label{err2}
 \eeqa
 The last two inequalities follow from the definition of
 $k_m$ and the facts that $|B| \ge (1-\gd)m$, which implies $(1+3\gd)|B| \ge (1+\gd)m$ for $\gd \in (0,1/10)$.
 Applying Lemma \ref{phibd} with $\kappa=\gD=1$,
 \beq \label{mnbd3}
 m\log(n/m) = n \phi_{1,1}(m/n) \ge n \phi_{1,1}(1/n) = \log n \text{ for $m\le n/e$.}\eeq
 Combining \eqref{Oineq5}, \eqref{err1} and \eqref{err2} if $m/n$ is small satisfying \eqref{mnbd1}, \eqref{mnbd2} and \eqref{mnbd3}, then
 \beqax
 \pr\left(|O^{A,B}_{K_m}| < |B|(4/\gd)q^{-K_m}\right)
 &\le & \pr\left(\left(\cap_{k=1}^{\gs k_m} H^{A,B}_k \cap \cap_{k=k_m+1}^{\gs k_m}
 L^{A,B}_k\right)^c\right) \\
 &\le & \sum_{k=1}^{\gs k_m} \pr\left((H^{A,B}_k)^c \cap_{j=1}^{k-1}
 H^{A,B}_j\right) \\
 && \hspace{-2cm} + \sum_{k=k_m+1}^{\gs k_m} \sum_{i=1}^{\rho} \pr\left((L^{A,B}_{k,i})^c\cap_{j=1}^{i-1} L^{A,B}_{k,j}
 \cap_{j=1}^{k} H^{A,B}_j\right)\\
 &\le & [\gs+\rho(\gs-1)] k_m \exp(-(1+\gd)m\log(n/m)) \\
 &\le & [\gs+\rho(\gs-1)] \log_2[C\log n] \\ && \exp(-(1+3\gd/4)m\log(n/m))n^{-\gd/4} \\
 &\le & \exp(-(1+3\gd/4)m\log(n/m))
 \eeqax
 for large enough $n$. Since the event considered in the last display involves
 at most $\gth(m)$ vertices of the graph, the above estimate together with
 \eqref{compare} implies
 \[ \tilde \pr\left(|O^{A,B}_{K_m}| < |B|(4/\gd)q^{-K_m}\right)
 \le \exp(-(1+3\gd/8)m\log(n/m)),\]
 with ($\I\vp_{\otimes n}/\OU\vp_{\otimes n}/\IO\vp_{\otimes n}$) probability $1-o(1)$ provided $m/n \le \eps$ is small.

 Using this estimate and union bound we see that if $m/n$ is small, then
 \begin{align}
 & \tilde\pr \left(\cup_{A\in\{A\subset [n]: |A|=m\}} E_A^c\right)  \notag \\
 & \le \tilde\pr \left(\cup_{m'\in[(1-\gd)m, m]} \cup_{\{(A,B): B\subset A\subset [n], |A|=m, |B|=m'\}} \left\{|O^{A,B}_{K_m}|<(4/\gd)q^{-K_m}|B|\right\}\right)  \notag \\
 & \le \sum_{m'\in[(1-\gd)m, m]} {n\choose m} {m\choose
 m'}\exp\left(-(1+3\gd/8)m\log\frac{n}{m}\right). \label{cGbd1}
 \end{align}
 It is easy to check that ${L\choose l}\le \frac{L^l}{l!} \le (Le/l)^l$ for any positive integers $l\le L$ and the function $\phi_{1,e}(\cdot)$ defined in Lemma \ref{phibd} is increasing on $(0,1)$. So for $m'\ge (1-\gd)m$,
 \beqax
 {n\choose m} \le \left(\frac{ne}{m}\right)^m \text{ and } {m\choose m'}
 & =   {m\choose m-m'} & \\
 & \le   \left(\frac{me}{m-m'}\right)^{m-m'}  & =\exp\left[ m \phi_{1,e}\left(\frac{m-m'}{m}\right)\right]\\
 & \le  \exp(m\phi_{1,e}(\gd)) & \le (e/\gd)^{\gd m}.
 \eeqax
 Also there are at most $m\le e^m$ choices for $m'$.
 Using these bounds the right hand side of \eqref{cGbd1}  is
 \beqax
 &\le & \exp[m+m\log(ne/m)+m\gd\log(e/\gd)-(1+3\gd/8)m\log(n/m)]\\
 &\le & \exp[-(3\gd/8)m\log(n/m)+\gD_1 m]
 \eeqax
 for some constant $\gD_1$. If $m/n \le \exp (-8\gD_1/\gd)$, then the right hand side of the last display is  $\le \exp[-(\gd/4)m\log(n/m)]$. Therefore, if
 $\ep_{\sref{cG1}}$ is chosen small enough, then for any $m\le \ep_{\sref{cG1}} n$,
 \[ \tilde\pr \left(\cup_{A\in\{A\subset [n]: |A|=m\}} E_A^c\right) \le \exp[-(\gd/4)m\log(n/m)].\]
 Combining this with the fact that $m\mapsto m\log(n/m)$ is
 increasing for $m\le n/e$ (by Lemma \ref{phibd}),
 \beqax
 \tilde\pr((\cG^1_n)^c)
 & \le  & \sum_{m \le [(\log n)^a,\ep_{\sref{cG1}} n]} \tilde\pr \left(\cup_{A\in\{A\subset [n]: |A|=m\}} E_A^c\right)
 \le \sum_{m \in [(\log n)^a,\ep_{\sref{cG1}} n]} \exp[-(\gd/4)(\log n)^a\log(n/(\log n)^a)] \\
 & \le &  n \exp[-(\gd/4)(\log n)^{1+a}(1+o(1))] = o(1/\sqrt
 n). \eeqax
 This together with \eqref{compare} completes the proof.
 \end{proof}

 Recall the definition of $\mvpi(\cdot,\cdot)$ from \eqref{pidef} and let $CL^x_k := \cup_{l=0}^k\ola Z^{\{x\}}_l$ be the oriented
 cluster of depth $k$ starting from $x\in[n]$ in the graph
 $\ola\sG_n$.
 Now define the events
 \beqax
 A_x &:=& \left\{\left|\ola\xi^{\{x\}}_{2a\log\log n/\log(q\tr)}\right| \ge (\log n)^a
 \right\}, \\
 A_{x,y} &:=& \left\{CL^x_{2a\log\log n/\log(q\tr)}
 \cap CL^y_{2a\log\log n/\log(q\tr)}=\emptyset\right\}.\eeqax

 \begin{prop} \label{cG2}
 For $\mvpi(\cdot,\cdot)$ as in \eqref{pidef}, $\vp$ as in \eqref{pdef} and $\eps>0$ let
 $\pi:=\mvpi(\vp,q)$ and
 \[ \cG^2_n := \left\{\sG_n: \frac 1n \sum_{x\in [n]} P_{\sG_n,q}(A_x) \ge \pi-\eps\right\} \cap
 \left\{\sum_{x,y\in[n], x\ne y}\mathbf 1_{A_{x,y}^c} \le n^{9/5}\right\}. \]
 Then $\pr_{i,n}(\cG^2_n)=1-o(1)$ for $i=2,3,4$.
 \end{prop}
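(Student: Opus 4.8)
The plan is to treat separately the two events in the definition of $\cG^2_n$, namely $G_1:=\{\frac1n\sum_{x}P_{\sG_n,q}(A_x)\ge\pi-\eps\}$ and $G_2:=\{\sum_{x\ne y}\mathbf 1_{A_{x,y}^c}\le n^{9/5}\}$, and to show $\pr_{i,n}(G_1^c)=o(1)$ and $\pr_{i,n}(G_2^c)=o(1)$ for $i=2,3,4$. Throughout put $T:=2a\log\log n/\log(q\tr)$ for the common time horizon appearing in $A_x$ and $A_{x,y}$; since $q\tr>1$ for $n$ large and $T=\Theta(\log\log n)$, one has $c^{\,T}=(\log n)^{\Theta(1)}$ for every fixed $c>1$, and in particular $(q\tr)^{T}=(\log n)^{2a}$. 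The single device used throughout is the coupling of Section~\ref{loc nei}: for a fixed vertex $x$, explore $\bigcup_{l\le T}\ola Z^{\{x\}}_l$ in $\ola\sG_n$ in breadth-first order; as long as no vertex is revealed twice (no ``collision''), the explored set is a tree on which the dual process $\{\ola\xi^{\{x\}}_t\}_{t\le T}$ is exactly a Galton--Watson process $\{Z_t\}_{t\le T}$ with offspring distribution $(1-q)\mvgd_0+q\vp_n$: each occupied site, independently with probability $q$, occupies all of its (uniquely parented) children, whose number is the out-degree in $\ola\sG_n$, distributed as $\vp_n$ at non-root sites. For $\RBN^2$ this is immediate because the in-degrees are \iid; for $\RBN^3,\RBN^4$ it follows from Lemmas~\ref{RBN3}--\ref{RBN4}, which also absorb the conditioning on $E_n$, $F_n$ and the size-biasing at interior vertices. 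By \eqref{pdef}--\eqref{p_ndef}, $\vp_n\to\vp$ with mean $r_n\to r>2$ and, on the good degree-sequence sets of those lemmas, with controlled offspring moments; hence $m_n:=qr_n>q\tr>1$, so $\{Z_t\}$ is supercritical with survival probability $\mvpi(\vp_n,q)\to\pi>0$. Moreover a first-moment estimate using those same degree-sequence sets gives $\E|\ola Z^{\{x\}}_l|\le C^{\,l}$, whence $\bigl|\bigcup_{l\le T}\ola Z^{\{x\}}_l\bigr|=(\log n)^{O(1)}$ outside an event of probability $o(1)$, and therefore, by a union bound over pairs of explored vertices, a collision occurs before time $T$ with probability $o(1)$; the same bound applies jointly to the explorations from any fixed pair $x\ne y$.

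To handle $G_1$, first observe that by vertex-exchangeability under $\pr_{i,n}$, $\E_{i,n}\bigl[\frac1n\sum_x P_{\sG_n,q}(A_x)\bigr]=\pr(A_{x_0})$ for a fixed $x_0$. By the coupling, $\pr(A_{x_0})=\pr\bigl(Z_T\ne\emptyset,\ |Z_T|\ge(\log n)^a\bigr)+o(1)$. Since $\{Z_t\}$ is supercritical, $\pr(Z_T\ne\emptyset)\to\mvpi(\vp,q)=\pi$, and conditionally on survival to generation $T$ the process has order $m_n^{\,T}\ge(\log n)^{2a}$ particles with probability tending to $1$, so (as $(\log n)^{2a}$ dominates $(\log n)^a$) classical branching-process theory gives $\pr(A_{x_0})\to\pi$; hence $\E_{i,n}\bigl[\frac1n\sum_x P_{\sG_n,q}(A_x)\bigr]\ge\pi-\eps/2$ for $n$ large. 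Concentration then follows from a second-moment bound: $\E_{i,n}\bigl[\bigl(\frac1n\sum_x P_{\sG_n,q}(A_x)\bigr)^2\bigr]=\frac1{n^2}\sum_{x,y}\pr(A_x\cap A_y)$, where the diagonal contributes $O(1/n)$, and for $x\ne y$ one couples $(\ola\xi^{\{x\}},\ola\xi^{\{y\}})_{[0,T]}$ with two independent copies of $\{Z_t\}$, which agree unless a collision occurs within or between the two explored balls --- again an $o(1)$ event uniformly in $x\ne y$ --- so $\pr(A_x\cap A_y)=\pi^2+o(1)$ uniformly. Thus the second moment is $\pi^2+o(1)$, the variance is $o(1)$, and Chebyshev's inequality gives $\pr_{i,n}(G_1^c)=o(1)$.

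To handle $G_2$, the key observation is that if $z\in CL^x_T\cap CL^y_T$, then concatenating an oriented $x$-to-$z$ path with the reversal of an oriented $y$-to-$z$ path in $\ola\sG_n$ produces a path of length $\le 2T$ joining $x$ and $y$ in the underlying undirected graph of $\sG_n$; hence $A_{x,y}^c\subseteq\{d(x,y)\le 2T\}$ and $\sum_{x\ne y}\mathbf 1_{A_{x,y}^c}\le\sum_x\bigl(|B(x,2T)|-1\bigr)$, where $B(x,2T)$ denotes the undirected ball of radius $2T$ about $x$. Taking expectations and using exchangeability, $\E_{i,n}\bigl[\sum_{x\ne y}\mathbf 1_{A_{x,y}^c}\bigr]\le n\bigl(\E_{i,n}|B(x_0,2T)|-1\bigr)$, and a first-moment walk count on the good degree sequences --- the degrees having finite second moment makes the relevant size-biased mean undirected degree bounded, and $T=\Theta(\log\log n)$ --- gives $\E_{i,n}|B(x_0,2T)|\le(\log n)^{O(1)}$. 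Consequently $\E_{i,n}\bigl[\sum_{x\ne y}\mathbf 1_{A_{x,y}^c}\bigr]\le n(\log n)^{O(1)}$, and by Markov's inequality $\pr_{i,n}(G_2^c)\le n^{-9/5}\cdot n(\log n)^{O(1)}=(\log n)^{O(1)}n^{-4/5}=o(1)$. Combining the two estimates yields $\pr_{i,n}(\cG^2_n)\ge 1-\pr_{i,n}(G_1^c)-\pr_{i,n}(G_2^c)=1-o(1)$.

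I expect the main obstacle to be the first paragraph: making the Galton--Watson coupling genuinely correct up to time $T$ for $\RBN^3$ and $\RBN^4$ --- that is, showing that the explored oriented ball is a tree with offspring law close to $(1-q)\mvgd_0+q\vp_n$ despite the conditioning on a valid (resp.\ simple) degree sequence --- which is precisely what Lemmas~\ref{RBN3}--\ref{RBN4} are designed to supply. The enabling feature is that $T$ is only of order $\log\log n$, so that explored balls are polylogarithmic in size and collisions (and, in the bound for $G_2$, long undirected paths) are negligible; once the coupling is in hand, the estimate for $G_1$ is a routine second-moment computation and that for $G_2$ a routine first-moment path count.
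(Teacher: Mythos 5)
Your argument for the first event, $\frac 1n\sum_x P_{\sG_n,q}(A_x)\ge\pi-\eps$, is essentially the paper's: compute the annealed mean of $P_{\sG_n,q}(A_x)$ by coupling the dual exploration up to time $T=2a\log\log n/\log(q\tr)$ with a Galton--Watson process with offspring law $(1-q)\mvgd_0+q\vp_n$ (collisions being negligible because the explored ball is polylogarithmic), invoke a branching-process lower-deviation result to get $\pr(A_{x_0})\to\pi$ since $(q\tr)^T=(\log n)^{2a}\gg(\log n)^a$, and then concentrate via a covariance bound that vanishes whenever $CL^x_T$ and $CL^y_T$ are disjoint. (One notational slip: $\E_{i,n}\bigl[P_{\sG_n,q}(A_x)P_{\sG_n,q}(A_y)\bigr]$ is not $\pr(A_x\cap A_y)$ but the probability that $A_x$ and $A_y$ occur for two \emph{independent} runs of the dual dynamics on the same graph; your disjoint-cluster coupling bounds exactly this quantity, so the argument is unaffected.) Where you genuinely diverge is the second event. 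The paper only establishes the weak pointwise bound $\tilde\pr(A_{x,y}^c)\le cn^{-1/8}$ --- dominated by the crude Markov bound $\pr(|CL^x_T|>n^{1/4})\le n^{-1/8}$ --- which is far too weak for a union/first-moment bound over $\binom n2$ pairs, and it therefore runs a second second-moment argument over quadruples $\set{x_1,x_2,x_3,x_4}$. You instead observe $A_{x,y}^c\subseteq\{d(x,y)\le 2T\}$ in the undirected graph, so that $\sum_{x\ne y}\mathbf 1_{A_{x,y}^c}\le\sum_x(|B(x,2T)|-1)$, and a first-moment path count (valid because the assumed finite second moments make the size-biased forward degree bounded, and $T=\Theta(\log\log n)$) gives expectation $n(\log n)^{O(1)}$, whence Markov at level $n^{9/5}$ finishes. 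This is simpler, and in fact sharper: it comfortably clears the $n^{9/5}$ threshold, whereas the paper's own second-moment conclusion ($\le n^{-1/10}\binom n2\approx n^{19/10}$) nominally overshoots it. The price of your route is that it leans on finite second moments to control undirected ball growth and, as you acknowledge, on making the exploration coupling rigorous for $\RBN^4$ under the conditioning on $E_n$ and $F_n$ --- precisely what Lemma \ref{RBN4} supplies. I find no gap.
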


 \begin{proof}
 In this proof also the notations $\pr$ and $\tilde\pr$ serve the same purpose as they did in the proof of Proposition \ref{cG1}. $\bE$ and $\tilde\bE$ denote
 the corresponding expectations.

 First we note that if
 \beq \label{clustbd}
 C_x := \left\{\left|CL^x_{2a\log\log n/\log(q\tr)}\right| \le
 n^{1/4}\right\}, \text{ then } \pr(C_x), \tilde \pr(C_x) \ge 1 - n^{-1/8},\eeq
 using Markov inequality. This bound together with (4) of Lemma \ref{RBN3} and
 \ref{RBN4} with $\eps=n^{-3/4}$ implies
 \beq \label{TVbd}
 \left|\bE\left(P_{\sG_n,q}(A_x)\right) - \tilde \bE\left(P_{\sG_n,q}(A_x)\right)\right|
 =o(1)+\left|\bE\left(P_{\sG_n,q}(A_x)\right)\mathbf 1_{C_x} - \tilde \bE\left(P_{\sG_n,q}(A_x)\right)\mathbf
 1_{C_x}\right|=o(1). \eeq
 Now if $B_x$ denotes the event that collision does not occur in the cluster $CL^x_{2a\log\log n/\log(q\tr)}$,
 then combining \eqref{clustbd} with Lemma \ref{RBN2}, \ref{RBN3} and \ref{RBN4},
 \beq \label{B_x}
 \pr(B_x^c) = o(1)+\pr(B_x^c \cap C_x) = o(1) + \tilde\pr(B_x^c \cap C_x) = o(1).
 \eeq
 On the event $B_x$, the law of $|\ola\xi^{\{x\}}_t|, 0\le t\le 2a\log\log n/\log(q\tr)$ under the annealed measure $\pr\times P_{\sG_n,q}$ is same as that of a branching process with offspring distribution $(1-q) \mvgd_0+q\vp_n$, where $\vp_n$ is as in \eqref{p_ndef}. So if $\{Z_t\}_{t\ge 0}$ is such a branching process with $Z_0=1$, then using \eqref{B_x},
 \[ \bE[P_{\sG_n,q}(A_x)] = P\left(Z_{2a\log\log n/\log(q\tr)} >(\log n)^a\right) + o(1).\]
 Imitating a branching process large deviation result (Theorem 3 in \cite{A94}) and following the argument which leads to Lemma 2.2 in \cite{CD11}, the above expression is $\ge \pi(\vp_n)-\eps/4+o(1)$.
 This together with \eqref{TVbd} and the fact that $\pi(\vp_n) \to \pi$
 as $n \to \infty$ implies
 \beq \label{A_xbd}
 \tilde\bE[P_{\sG_n,q}(A_x)] \ge \pi - \eps/2\eeq
 if $n$ is large enough. Also using \eqref{clustbd} and Lemma
 \ref{RBN2}, \ref{RBN3} and \ref{RBN4}
 \beq \label{A_xybd}
 \tilde\pr(A_{x,y}^c) \le \tilde\pr(C_x^c) + \tilde\pr(C_y^c) +
 \tilde\pr(A_{x,y}^c \cap C_x \cap C_y) \le 2n^{-1/8} + c
 n^{1/4}\left(\frac{n^{1/4}}{n}\right)^{(\ga-2)/(\ga-1)} \le
 cn^{-1/8} \eeq
 for some constant $c$.
 Using the last inequality and following the argument which leads to (2.13) in \cite{CD11}, if $x_1, x_2 \in [n]$ are such that $x_1\ne x_2$, then
 \beqax
  && \tilde\bE[P_{\sG_n,q}(A_{x_1}) P_{\sG_n,q}(A_{x_2})] - \tilde\bE[P_{\sG_n,q}(A_{x_1})]\tilde\bE[P_{\sG_n,q}(A_{x_2})] \\
 & \le & \tilde\pr(A_{x_1,x_2}^c)[1+1/\tilde\pr(A_{x_1,x_2})] \le
 cn^{-1/8}.
 \eeqax
 So using a standard second moment argument and then combining with
 \eqref{A_xbd}
 \[
 \tilde\pr\left(\sum_{x\in[n]} P_{\sG_n,q}(A_x) < n(\pi-\eps)\right) =
 o(1).
 \]
 By a similar argument if $x_1, x_2, x_3, x_4 \in [n]$ are such that
 $x_1\ne x_2$, $x_3\ne x_4$ and $\{x_1, x_2\} \cap \{x_3, x_4\} =
 \emptyset$, then
 \beqax
 && \tilde\bE[A_{x_1,x_2} \cap A_{x_3,x_4}] - \tilde\bE[A_{x_1,x_2}]\tilde\bE[A_{x_3,x_4}] \\
 & \le & \tilde\pr(\cup_{i\in\{1,2\}, j\in\{3,4\}} A_{x_i,x_j}^c)[1+1/\tilde\pr(\cap_{i\in\{1,2\}, j\in\{3,4\}} A_{x_i,x_j})] \le c n^{-1/8} \le c
 n^{-1/8},
 \eeqax
 and hence combining with \eqref{A_xybd} and using the standard
 second moment argument,
 \[ \tilde\pr\left(\sum_{x,y\in[n], x\ne y} \mathbf 1_{A_{x,y}^c} >  n^{-1/10}{n\choose 2}\right) =
 o(1).\]
 This completes the proof.
 \end{proof}

 \section{Proofs of the Theorems} \label{proof}
 Let $\sF$ be a forest consisting of $m$ rooted
 directed trees and let $\sF_{k,i}$ denote the set of vertices of
 the $i$-th tree which are at oriented distance $k$ from the root
 level and $\sF_k=\cup_{i=1}^m \sF_{k,i}$.

 \begin{lem} \label{forest est}
 If $P_{\sF, q}$ denotes the law of $\{\ola\xi^A_t, A\subset \sF_0, t\ge 0\}$ on the directed
 forest $\sF$ and if $|\sF_k| \ge 2q^{-k}|\sF_0|$, then
 \[P_{\sF, q}\left( \left| \ola\xi^{\sF_0}_k\right| \le |\sF_0| \right) \le \exp\left( -c q^k |\sF_k|^2/\sum_{i=1}^m |\sF_{k,i}|^2\right). \]
 \end{lem}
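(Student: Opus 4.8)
The plan is to exploit the fact that $\sF$ is a disjoint union of $m$ rooted trees, on which the dual coalescing branching process involves no genuine coalescence since every non-root vertex has a unique parent. Hence the process restricted to the $m$ trees evolves independently, and, writing $X:=|\ola\xi^{\sF_0}_k|$, one gets the decomposition $X=\sum_{i=1}^{m}X_i$ with $X_i:=|\ola\xi^{\sF_0}_k\cap\sF_{k,i}|$ independent across $i$ and $0\le X_i\le|\sF_{k,i}|$. A level-$k$ vertex of the $i$-th tree lies in $\ola\xi_k$ exactly when each of its $k$ strict ancestors gives birth at the appropriate time, these being $k$ independent events of probability $q$; so each is present with probability $q^k$, whence $\E X_i=q^k|\sF_{k,i}|$ and $\E X=q^k|\sF_k|\ge 2|\sF_0|$ by the hypothesis $|\sF_k|\ge 2q^{-k}|\sF_0|$. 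Since $X_i^2\le|\sF_{k,i}|X_i$, also $\mathrm{var}(X_i)\le\E X_i^2\le q^k|\sF_{k,i}|^2$, so $\mathrm{var}(X)=\sum_i\mathrm{var}(X_i)\le q^k\sum_{i=1}^m|\sF_{k,i}|^2$. (One may assume $\sF_0\ne\emptyset$ and $k\ge1$, the statement being vacuous otherwise.)

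Next I would control the lower tail of $X$ through a Bennett-type estimate, applying Lemma \ref{laplace transbd} to each $X_i$. The one substantive point — and the place where $q\le 1/2$ enters — is checking the hypothesis $2(\E X_i)^2\le\E X_i^2$ of that lemma. Conditioning on whether the root of the $i$-th tree gives birth at time $0$, write $X_i=\mathbf 1_{B_i}\sum_j Y_{i,j}$, where $B_i$ is that birth event (of probability $q$) and the $Y_{i,j}\ge 0$ are the level-$(k-1)$ dual counts of the subtrees rooted at the root's children, independent of $B_i$; then $\E X_i^2=q\,\E\bigl(\sum_j Y_{i,j}\bigr)^2\ge q\bigl(\E\sum_j Y_{i,j}\bigr)^2=(\E X_i)^2/q\ge 2(\E X_i)^2$, using $q\in(0,1/2]$ (the case $X_i\equiv 0$ being trivial). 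Lemma \ref{laplace transbd} then gives $\log\E e^{-tX_i}\le\mathrm{var}(X_i)t^2/2-\E(X_i)t$ for every $t>0$, and summing over $i$ by independence, $\log\E e^{-tX}\le\tfrac12 t^2\mathrm{var}(X)-t\,\E X$.

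Finally I would run the standard Chernoff argument: for $t>0$,
\[ P_{\sF,q}\bigl(|\ola\xi^{\sF_0}_k|\le|\sF_0|\bigr)\le e^{t|\sF_0|}\E e^{-tX}\le\exp\Bigl(-t(\E X-|\sF_0|)+\tfrac12 t^2\mathrm{var}(X)\Bigr), \]
and choosing $t=(\E X-|\sF_0|)/\mathrm{var}(X)$ (valid when $\mathrm{var}(X)>0$; the left side is $0$ otherwise) bounds this by $\exp\bigl(-(\E X-|\sF_0|)^2/(2\,\mathrm{var}(X))\bigr)$. Since $\E X\ge 2|\sF_0|$ gives $\E X-|\sF_0|\ge\tfrac12\E X=\tfrac12 q^k|\sF_k|$, and $\mathrm{var}(X)\le q^k\sum_i|\sF_{k,i}|^2$, this is at most $\exp\bigl(-q^k|\sF_k|^2/(8\sum_{i=1}^m|\sF_{k,i}|^2)\bigr)$, which is the claimed inequality with $c=1/8$. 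The main obstacle is precisely the second-moment inequality $2(\E X_i)^2\le\E X_i^2$ underpinning the use of Lemma \ref{laplace transbd}; the remaining steps are routine.
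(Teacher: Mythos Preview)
Your proof is correct and follows essentially the same route as the paper: decompose $|\ola\xi^{\sF_0}_k|$ as an independent sum over the $m$ trees, apply Lemma~\ref{laplace transbd} to each summand, and run Chernoff with the optimal $t$ to obtain the bound with $c=1/8$. The only cosmetic differences are that the paper bounds $E X_i^2$ above and below via the explicit pairwise formula $E(Y_xY_z)=q^{k+l(x,z)-1}$ (yielding $q^{2k-1}|\sF_{k,i}|^2\le E X_i^2\le q^k|\sF_{k,i}|^2$, the lower bound giving $E X_i^2\ge (EX_i)^2/q$ just as your conditioning argument does), and it keeps the sharper variance estimate $\mathrm{var}(X_i)\le(q^k-q^{2k})|\sF_{k,i}|^2$ rather than your $q^k|\sF_{k,i}|^2$; neither affects the conclusion.
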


 \begin{proof}
 Let $m=|\sF_0|$. For $x\in \sF_k$ let $Y_x:=\mathbf 1\{x \in \ola\xi^{\sF_0}_k\}$ and for $1\le i\le m$ let $N_i:=\sum_x Y_x \mathbf 1\{x\in \sF_{k,i}\}$. It is easy to see that if $l(x,y)$ equals half of the distance between $x$ and $y$ in the forest ignoring the orientation of the edges, then
 \begin{align*}
 & E_{\sF, q} Y_x = q^k,  \quad
 E_{\sF, q} (Y_x Y_z) = \begin{cases} q^k & \text{ if } x=z\\
                       q^{k+l(x,y)-1} & \text{ if } 1\le l(x,y) \le k,\\
                       q^{2k} & \text{ otherwise }  \end{cases} \text{ so that } \\
 & E_{\sF, q} N_i = q^k|\sF_{k,i}|,  E_{\sF, q} N_i^2 = \sum_{x,z \in \sF_{k,i}} E_{\sF, q} (Y_x Y_z) \in \left[q^{2k-1}|\sF_{k,i}|^2 , q^k|\sF_{k,i}|^2\right].
 \end{align*}
  By our hypothesis, $\sum_{i=1}^m E_{\sF, q} N_i \ge 2m$. So applying Lemma \ref{laplace transbd}
 \beqax
 P_{\sF, q}\left(\left|\ola\xi^{\sF_0}_k\right| \le m\right)
 & = & P_{\sF, q}\left(\sum_{i=1}^m N_i \le m\right)
 \le \exp\left(tm + \sum_{i=1}^m \log E_{\sF, q} e^{-t N_i}\right)   \\
 & \le & \exp\left(tm + \sum_{i=1}^m \left[-t E_{\sF, q} N_i + \left(E_{\sF, q}N_i^2 - [E_{\sF, q} N_i]^2\right)t^2/2\right] \right) \\
 & \le & \exp\left(-\sum_{i=1}^m \left[(t/2) E_{\sF, q} N_i - \left(E_{\sF, q}N_i^2-\left[E_{\sF, q}N_i\right]^2\right) t^2/2\right] \right)
 \eeqax
 for any $t\ge 0$. Optimizing the last expression with respect to $t$ and noting that $at-bt^2/2 \le a^2/2b$ for any $a, b>0$ we have
 \beqa \label{perc bd1}
 P_{\sF, q}\left(\left|\ola\xi^{\sF_0}_k\right| \le m\right)
 & \le & \exp\left(-\frac{\left(\sum_{i=1}^m E_{\sF, q} N_i\right)^2}{8\sum_{i=1}^m \left(E_{\sF, q} N_i^2 - (E_{\sF, q} N_i)^2\right)}\right) \\
 & \le & \exp\left(-\frac{q^{2k}/8}{q^k-q^{2k}} \left(\sum_{i=1}^m\left|\sF_{k,i}\right|\right)^2/\sum_{i=1}^m\left|\sF_{k,i}\right|^2\right).
 \eeqa
 \end{proof}

 \begin{prop} \label{Bigsurv}
 Let $\ep_{\sref{cG1}}$ and $\cG^1_n$ be as in Proposition
\ref{cG1} and $K_m$ be as in \eqref{K_mdef}. There are  constants
$C_{\sref{Bigsurv}}, b > 0$ such that if $\sG_n \in \cG^1_n$ and
$A\subset [n]$ has size $m\le \ep_{\sref{cG1}} n$, then
 \[ P_{\sG_n,q}\left(\left|\ola\xi^A_{K_{m}}\right| \le m\right) \le \exp\left(-C_{\sref{Bigsurv}} m(\log(n/m))^{-b}\right).\]
 \end{prop}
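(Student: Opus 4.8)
The plan is to use the duality relation \eqref{eqn:eqn-duality} to replace $\xi^A$ by the dual coalescing branching process $\ola\xi^A$ on $\ola\sG_n$, to bound $\ola\xi^A$ from below by oriented percolation on the forest of \emph{open} sites extracted from $\{Z^A_t\}_{t\le K_m}$ in Section \ref{loc nei}, and then to control that percolation by combining Proposition \ref{cG1} with Lemma \ref{forest est}. For the comparison, recall that distinct open sites of $\{Z^A_t\}$ carry distinct labels (a collision always produces a closed site, never an open one) and that the $I_u$ children of an open site with label $u$ are labelled by the out-neighbours of $u$ in $\ola\sG_n$. Coupling the birth of an open forest site $v$ at level $t$ with the birth of the vertex $\mathrm{label}(v)$ at time $t$ in the dual (the coupling is legitimate since open sites carry distinct labels, so the forest births remain \iid with parameter $q$), an induction on $t$ shows that any open forest site occupied at level $t$ by the percolation started from $Z^A_0=A$ has its label in $\ola\xi^A_t$. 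Since the label map is injective on open sites, $|\ola\xi^A_{K_m}|\ge|\ola\xi^{Z^A_0}_{K_m}|$, where the right-hand side now refers to the percolation on the open-site forest, and it suffices to bound the probability that the latter has at most $m$ occupied sites at level $K_m$.

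I would organise this open-site forest $\sF$ into the $m$ rooted trees $\sF_{\cdot,i}$, the $i$-th consisting of the open sites descended from $u_i$ and built using the per-block sets $O^{A,\{u_i\}}_t$, so that $|\sF_{K_m,i}|\le(2r)^{K_m}$ for every $i$. Two facts are needed: (a) $|\sF_{K_m}|\ge 2q^{-K_m}m$, supplied by the event $E_A\subseteq\cG^1_n$ through Proposition \ref{cG1}; and (b) consequently $\sum_i|\sF_{K_m,i}|^2\le(2r)^{K_m}\sum_i|\sF_{K_m,i}|=(2r)^{K_m}|\sF_{K_m}|$. Since $|\sF_{K_m}|\ge 2q^{-K_m}m$ is precisely the hypothesis of Lemma \ref{forest est}, that lemma gives
\[
P_{\sG_n,q}\!\left(|\ola\xi^A_{K_m}|\le m\right)\le\exp\!\left(-c\,q^{K_m}\frac{|\sF_{K_m}|^2}{\sum_i|\sF_{K_m,i}|^2}\right)\le\exp\!\left(-c\,q^{K_m}\frac{|\sF_{K_m}|}{(2r)^{K_m}}\right)\le\exp\!\left(-2c\,\frac{m}{(2r)^{K_m}}\right).
\]
From \eqref{K_mdef}, $K_m=\rho\gs k_m$ with $2^{k_m}=\Theta(\log(n/m))$, hence $(2r)^{K_m}=(2^{k_m})^{\rho\gs\log_2(2r)}=\Theta\!\big((\log(n/m))^{\gb_2}\big)$ with $\gb_2=\rho\gs\log_2(2r)$; taking $b=\gb_2$ and $C_{\sref{Bigsurv}}$ accordingly yields the claim. (When $|A|$ is below the threshold $(\log n)^a$ of $\cG^1_n$ the claimed bound is essentially vacuous and is dealt with separately, or follows from the way the proposition is invoked.)

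The one genuinely delicate point is the pair (a)--(b): the forest must be cut so that each tree has at most $(2r)^{K_m}$ leaves at level $K_m$ while the total leaf count stays $\ge 2q^{-K_m}m$. The global truncation defining $R^A_{t+1}$ keeps the leaf count balanced only in aggregate, not tree by tree; the per-block truncation controls each tree, but $\bigsqcup_i O^{A,\{u_i\}}_{K_m}$ is in general a proper subset of $O^{A,A}_{K_m}=O^A_{K_m}$, so deriving $|\sF_{K_m}|\ge 2q^{-K_m}m$ for the pruned forest from $E_A$ takes some care --- this is exactly why Proposition \ref{cG1} is proved for \emph{all} $B$ with $|B|\ge(1-\gd)|A|$, which makes it possible to conclude that most of the individual root-clusters $O^{A,\{u_i\}}_{K_m}$ are large. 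Granting this bookkeeping, everything else is the routine computation displayed above.
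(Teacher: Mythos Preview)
Your approach is essentially the paper's: both single out the ``good'' roots $\tau_A=\{x\in A:|O^{A,\{x\}}_{K_m}|\ge(4/\gd)q^{-K_m}\}$ (you flag this as the needed bookkeeping in your final paragraph; the paper makes it explicit and shows $|\tau_A|\ge\gd m$ by noting that otherwise $B=A\setminus\tau_A$ would have $|B|\ge(1-\gd)m$ yet $|O^{A,B}_{K_m}|<(4/\gd)q^{-K_m}|B|$, contradicting $E_A$), restrict the forest to those roots, and apply Lemma~\ref{forest est}. The one substantive difference is how the ratio $|\sF_{K_m}|^2/\sum_i|\sF_{K_m,i}|^2$ is controlled: instead of your crude bound $|\sF_{K_m,i}|\le(2r)^{K_m}$, the paper further prunes each good tree to retain \emph{exactly} $\lceil(4/\gd)q^{-K_m}\rceil$ leaves via $\Pi(\lceil(4/\gd)q^{-K_m}\rceil,O^{A,\{x\}}_{K_m})$, so all surviving trees are perfectly balanced and the ratio is simply $|\sF_0|\ge\gd m$. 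This yields the tighter bound $\exp(-\tfrac18 q^{K_m}\gd m)$, hence $b=\rho\gs\log_2(1/q)$ rather than your $b=\gb_2=\rho\gs\log_2(2r)$; either suffices for the proposition as stated. One presentational point: your displayed computation is written for the full forest with $|\sF_0|=m$ before you introduce the $\tau_A$ reduction, so strictly speaking claim~(a) there is not yet justified; after restricting to $\tau_A$ the same chain of inequalities goes through with $|\sF_{K_m}|\ge 4q^{-K_m}m$ (and the version of Lemma~\ref{forest est} one really uses bounds $P(|\ola\xi^{\sF_0}_{K_m}|\le M)$ whenever $q^{K_m}|\sF_{K_m}|\ge 2M$, here with $M=m$).
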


 \begin{proof}
 For $\sG_n \in \cG^1_n$, any $A \subset [n]$ with $|A|=m\le
 \ep_{\sref{cG1}} n$ and $\gd$ as in Proposition \ref{cG1}, define
 \[\tau_A := \left\{x\in A: |O^{A,\{x\}}_{K_m}| \ge (4/\gd)q^{-K_m}\right\}.\]
 Clearly $|\tau_A| \ge \gd |A|$, because otherwise $B=A\setminus
 \tau_A$ will have $|B| \ge (1-\gd)|A|$ and
 \[ |O^{A,B}_{K_m}| \le \sum_{x\in B} |O^{A,\{x\}}_{K_m}| <(4/\gd)q^{-K_m} |B|\]
 by the definition of $\tau_A$ and this contradicts the fact that $\sG_n \in \cG^1_n$.

 Let $\sF$ be the subgraph of $\{Z^{A,\tau_A}_t\}_{t=0}^{K_m}$induced by the vertex set
 \[ \cup_{x\in\tau_A} \left(\cup_{i=0}^{K_m-1} O^{A,\{x\}}_t  \cup \Pi(\lceil(4/\gd)q^{-K_m}\rceil, O^{A,\{x\}}_{K_m})\right).\]
 So $\sF$ is a labeled directed forest with depth $K_m$ such that $|\sF_0| \ge \gd m$ and $|\sF_{K_m,i}|=\lceil(4/\gd)q^{-K_m}\rceil$ for all $i$. Applying Lemma \ref{forest est} with $k$ replaced by $K_m$ and $m$ replaced by $\gd m$, and noting that $|\ola \xi^A_{K_m}|$ stochastically dominates $|\ola \xi^{\sF_0}_{K_m}|$,
 \[ P_{\sG_n,q}\left(\left|\ola\xi^A_{K_m}\right| \le m\right) \le \exp\left(-\frac 18 q^{K_m} \gd m\right).\]
 This proves the result.
 \end{proof}

 \begin{proof}[Proof of Theorem \ref{thm:RBN2} and \ref{thm:RBN4}]
 We take $\cG_n:=\cG^1_n \cap \cG^2_n$, where $\cG^1_n$ and $\cG^2_n$ are as in Proposition \ref{cG1} and \ref{cG2} respectively,
 and we will see that
 \[\gD:=\frac 12 C_{\sref{Bigsurv}}\ep_{\sref{cG1}}[\log(1/\ep_{\sref{cG1}})]^{-b}\]
 will suffice, where $C_{\sref{Bigsurv}}, b$ are as in Proposition \ref{Bigsurv}.  Clearly $P_{i,n}(\cG_n)=1-o(1)$.
 Define
 \[ T_x:=\inf\left\{t\ge 1: |\ola\xi^{\{x\}}_t| \ge \ep_{\sref{cG1}} n\right\}.\]
 We take $T_x=\infty$ if $\ola\xi^{\{x\}}_t$ never reaches
 $\ep_{\sref{cG1}} n$.
 Recalling the definition of the event $A_x$ from \eqref{A_xbd} and then applying Proposition \ref{cG1}, $\sG_n \in
 \cG_n$ implies
 \beqa
 && P_{\sG_n,q} \left(A_x \cap \left\{T_x > 2a\log\log n/\log(q\tr) + \sum^{\ep_{\sref{cG1}} n-1}_{m=(\log n)^a} K_m\right\}\right) \notag \\
 &\le& \sum_{m=(\log n)^a}^{\ep_{\sref{cG1}} n}  \exp\left(-C_{\sref{Bigsurv}} m(\log(n/m))^{-b}\right) \notag \\
 &\le& n \exp\left(-C_{\sref{Bigsurv}} (\log n)^a(\log(n/(\log n)^a))^{-b}\right) = o(1/n) \label{xsurv1}
 \eeqa
 if $a$ is large enough. For $i\ge 1$ if $|\ola\xi^{\{x\}}_{T_x +
 (i-1) K_m}| \ge \ep_{\sref{cG1}} n$, then we can again apply
 Proposition \ref{cG1} with $A$ replaced by any subset of $\ola\xi^{\{x\}}_{T_x+(i-1)K_m}$ consisting of $\ep_{\sref{cG1}} n$ many
 vertices to have
 \[ P_{\sG_n,q}\left(\left\{|\ola\xi^{\{x\}}_{T_x + i K_m}| < \ep_{\sref{cG1}} n\right\} \cap \left\{|\ola\xi^{\{x\}}_{T_x+(i-1)K_m}|\ge\ep_{\sref{cG1}} n\right\}\right) \le \exp\left(-C_{\sref{Bigsurv}} \ep_{\sref{cG1}}[\log(1/\ep_{\sref{cG1}})]^{-b}n\right),\]
 which in turn implies
 \beq\label{xsurv2}
 P_{\sG_n,q}\left(\left\{\left|\ola\xi^{\{x\}}_{T_x + e^{\gD n} K_m}\right| < \ep_{\sref{cG1}} n\right\} \cap \{T_x < \infty\}\right) = e^{\gD n}e^{-2\gD n}=o(1/n).\eeq
 Combining \eqref{xsurv1} and \eqref{xsurv2} and using union bound,
 \[P_{\sG_n,q}\left(\cup_{x\in[n]} \left[A_x \cap \left\{\ola\xi^{\{x\}}_{\exp(\gD n)} = \emptyset\right\}\right]\right) \le n o(1/n)=o(1).\]
 This together with the duality relationship between $\xi_t$ and $\ola\xi_t$ suggests
 \beq \label{densitybd}
 P_{\sG_n,q}\left(\xi^{[n]}_{\exp(\gD n)} \supset \{x\in[n]: A_x \text{ occurs}\}\right)
 = P_{\sG_n,q}\left(\ola\xi^{\{x\}}_{\exp(\gD n)} \ne \emptyset \text{ if  $A_x$ occurs}\right) = 1-o(1).
 \eeq
 Now in order to estimate the size of $\{x \in [n]: A_x \text{ occurs}\}$, we will use a second moment argument for $\sum_{x\in[n]} \mathbf 1_{A_x}$. Note that
 \[ E_{\sG_n,q}\left[\sum_{x\in[n]} \mathbf 1_{A_x} - \sum_{x\in[n]} P_{\sG_n,q}(A_x)\right]^2 = \sum_{x,y\in[n]} [P_{\sG_n,q}(A_x \cap A_y) -P_{\sG_n,q}(A_x) P_{\sG_n,q}(A_y)].\]
 Recalling the definition of the event $A_{x,y}$ from \eqref{A_xybd} if $A_{x,y}$ occurs, then the corresponding summand in the above sum is 0, otherwise the summands are at most 1.
 Keeping this observation in mind and using the fact that $\sG_n \in \cG^2_n$,
 \[ E_{\sG_n,q}\left[\sum_{x\in[n]} \mathbf 1_{A_x} - \sum_{x\in[n]} P_{\sG_n,q}(A_x)\right]^2 \le n + \sum_{x,y\in[n], x\ne y} \mathbf 1_{A_{x,y}^c} \le n + {n\choose 2} o(1).\]
 Also from Proposition \ref{cG2} $\sum_{x\in[n]} P_{\sG_n,q}(A_x) \ge n(\pi-\eps)$ for $\sG_n \in \cG^2_n$. Therefore, by Chebyshev inequality
 \[ P_{\sG_n,q}\left(\sum_{x\in[n]} \mathbf 1_{A_x} < (\pi-2\eps) n\right) =o(1).
 \]
 Combining this with \eqref{densitybd}
 \[P_{\sG_n,q}\left(\left|\xi^{[n]}_{\exp(\gD n)}\right| \ge n(\pi-2\eps)\right) =1-o(1).\]
 So the required result follows from attractiveness of the threshold contact process.
 \end{proof}

\addtocontents{toc}{\protect\setcounter{tocdepth}{0}}
\section*{Acknowledgments}
The author thanks Shankar Bhamidi for various helpful comments and
discussions while writing this article.

\addtocontents{toc}{\protect\setcounter{tocdepth}{1}}
\bibliographystyle{amsalpha}

\begin{bibdiv}
\begin{biblist}

\bib{RA05}{article}{
      author={Albert, R.},
       title={Scale-free networks in cell biology},
        date={2005},
     journal={Journal of cell science},
      volume={118},
      number={21},
       pages={4947\ndash 4957},
}

\bib{AO03}{article}{
      author={Albert, R.},
      author={Othmer, H.G.},
       title={The topology of the regulatory interactions predicts the
  expression pattern of the segment polarity genes in drosophila melanogaster},
        date={2003},
     journal={Journal of Theoretical Biology},
      volume={223},
      number={1},
       pages={1\ndash 18},
}

\bib{AC03}{article}{
      author={Aldana, M.},
      author={Cluzel, P.},
       title={A natural class of robust networks},
        date={2003},
     journal={Proceedings of the National Academy of Sciences},
      volume={100},
      number={15},
       pages={8710},
}

\bib{A94}{article}{
      author={Athreya, K.B.},
       title={Large deviation rates for branching processes--i. single type
  case},
        date={1994},
     journal={The Annals of Applied Probability},
      volume={4},
      number={3},
       pages={779\ndash 790},
}

\bib{B62}{article}{
      author={Bennett, G.},
       title={Probability inequalities for the sum of independent random
  variables},
        date={1962},
     journal={Journal of the American Statistical Association},
       pages={33\ndash 45},
}

\bib{CD11}{article}{
      author={Chatterjee, S.},
      author={Durrett, R.},
       title={Persistence of activity in threshold contact processes, an
  "annealed approximation" of random boolean networks},
        date={2011},
     journal={Random Structures \& Algorithms},
}

\bib{DP86}{article}{
      author={Derrida, B.},
      author={Pomeau, Y.},
       title={Random networks of automata: a simple annealed approximation},
        date={1986},
     journal={EPL (Europhysics Letters)},
      volume={1},
       pages={45},
}

\bib{drossel2005number}{article}{
      author={Drossel, B.},
       title={Number of attractors in random boolean networks},
        date={2005},
     journal={Physical Review E},
      volume={72},
      number={1},
       pages={016110},
}

\bib{drossel20083}{article}{
      author={Drossel, B.},
       title={3 random boolean networks},
        date={2008},
     journal={Reviews of nonlinear dynamics and complexity},
      volume={1},
       pages={69},
}

\bib{D07}{book}{
      author={Durrett, R.},
       title={Random graph dynamics},
   publisher={Cambridge Univ Pr},
        date={2007},
      volume={20},
}

\bib{FK88}{article}{
      author={Flyvbjerg, H.},
      author={Kjr, NJ},
       title={Exact solution of kauffman's model with connectivity one},
        date={1988},
     journal={Journal of Physics A: Mathematical and General},
      volume={21},
       pages={1695},
}

\bib{FH01}{article}{
      author={Fox, J.J.},
      author={Hill, C.C.},
       title={From topology to dynamics in biochemical networks},
        date={2001},
     journal={Chaos: An Interdisciplinary Journal of Nonlinear Science},
      volume={11},
      number={4},
       pages={809\ndash 815},
}

\bib{G79}{book}{
      author={Griffeath, David},
      author={Griffeath, David},
       title={Additive and cancellative interacting particle systems},
   publisher={Springer-Verlag Berlin},
        date={1979},
}

\bib{huang1999gene}{article}{
      author={Huang, S.},
       title={Gene expression profiling, genetic networks, and cellular states:
  an integrating concept for tumorigenesis and drug discovery},
        date={1999},
     journal={Journal of Molecular Medicine},
      volume={77},
      number={6},
       pages={469\ndash 480},
}

\bib{KCA02}{article}{
      author={Kadanoff, L.},
      author={Coppersmith, S.},
      author={Aldana, M.},
       title={Boolean dynamics with random couplings},
        date={2002},
     journal={Arxiv preprint nlin/0204062},
}

\bib{K69}{article}{
      author={Kauffman, S.A.},
       title={Metabolic stability and epigenesis in randomly constructed
  genetic nets},
        date={1969},
     journal={Journal of theoretical biology},
      volume={22},
      number={3},
       pages={437\ndash 467},
}

\bib{K93}{book}{
      author={Kauffman, Stuart~A.},
       title={The origins of order: Self organization and selection in
  evolution},
   publisher={Oxford University Press, USA},
        date={1993},
}

\bib{LR08}{article}{
      author={Lee, D.S.},
      author={Rieger, H.},
       title={Broad edge of chaos in strongly heterogeneous boolean networks},
        date={2008},
     journal={Journal of Physics A: Mathematical and Theoretical},
      volume={41},
       pages={415001},
}

\bib{LS97}{article}{
      author={Luque, B.},
      author={Sol{\'e}, R.V.},
       title={Phase transitions in random networks: simple analytic
  determination of critical points},
        date={1997},
     journal={Physical Review E},
      volume={55},
      number={1},
       pages={257\ndash 260},
}

\bib{MV11}{article}{
      author={Mountford, T.},
      author={Valesin, D.},
       title={Supercriticality for annealed approximations of boolean
  networks},
        date={2010},
     journal={Arxiv preprint arXiv:1007.0862},
}

\bib{NSW01}{article}{
      author={Newman, M.E.J.},
      author={Strogatz, S.H.},
      author={Watts, D.J.},
       title={Random graphs with arbitrary degree distributions and their
  applications},
        date={2001},
     journal={Phys. Rev. E.},
      volume={64},
       pages={026118},
}

\bib{NSW02}{article}{
      author={Newman, M.E.J.},
      author={Watts, D.J.},
      author={Strogatz, S.H.},
       title={Random graph models of social networks},
        date={2002},
     journal={Proceedings of the National Academy of Sciences of the United
  States of America},
      volume={99},
      number={Suppl 1},
       pages={2566\ndash 2572},
}

\bib{POGL09}{article}{
      author={Pomerance, A.},
      author={Ott, E.},
      author={Girvan, M.},
      author={Losert, W.},
       title={The effect of network topology on the stability of discrete state
  models of genetic control},
        date={2009},
     journal={Proceedings of the National Academy of Sciences},
      volume={106},
      number={20},
       pages={8209},
}

\bib{pomerance2009effect}{article}{
      author={Pomerance, A.},
      author={Ott, E.},
      author={Girvan, M.},
      author={Losert, W.},
       title={The effect of network topology on the stability of discrete state
  models of genetic control},
        date={2009},
     journal={Proceedings of the National Academy of Sciences},
      volume={106},
      number={20},
       pages={8209\ndash 8214},
}

\bib{shmulevich2002binary}{article}{
      author={Shmulevich, I.},
      author={Zhang, W.},
       title={Binary analysis and optimization-based normalization of gene
  expression data},
        date={2002},
     journal={Bioinformatics},
      volume={18},
      number={4},
       pages={555\ndash 565},
}

\bib{SL94}{article}{
      author={Sol{\'e}, R.V.},
      author={Luque, B.},
       title={Phase transitions and antichaos in generalized kauffman
  networks},
        date={1994},
     journal={Physics Letters A},
      volume={196},
      number={1-2},
       pages={331\ndash 334},
}

\bib{somogyi1996modeling}{article}{
      author={Somogyi, R.},
      author={Sniegoski, C.},
       title={Modeling the complexity of genetic networks: understanding
  multigenic and pleiotropic regulation},
        date={1996},
     journal={Complexity},
      volume={1},
       pages={45\ndash 63},
}

\bib{S74}{article}{
      author={Stigler, Stephen~M},
       title={Linear functions of order statistics with smooth weight
  functions},
        date={1974},
     journal={The Annals of Statistics},
       pages={676\ndash 693},
}

\bib{tabus2006normalized}{article}{
      author={Tabus, I.},
      author={Jorma, R.},
      author={Astola, J.},
       title={Normalized maximum likelihood models for boolean regression with
  application to prediction and classification in genomics},
        date={2006},
     journal={Computational and Statistical Approaches to Genomics},
       pages={235\ndash 258},
}

\bib{W77}{article}{
      author={Wellner, Jon~A},
       title={A glivenko-cantelli theorem and strong laws of large numbers for
  functions of order statistics},
        date={1977},
     journal={The Annals of Statistics},
       pages={473\ndash 480},
}

\end{biblist}
\end{bibdiv}

\addtocontents{toc}{\protect\setcounter{tocdepth}{0}}

\end{document}